\numberwithin{equation}{section}
\newtheorem{thm}{Theorem}[section]
\newtheorem{lem}[thm]{Lemma}
\newtheorem{Prop}[thm]{Proposition}
\begin{document}
\title[Helmholtz equation]{On concentration of real solutions for fractional Helmholtz equation}
\author[Z.\ Shen]{Zifei Shen}
\author[S.\ Zhang]{Shuijin Zhang}
\address{Zifei Shen, Shuijin Zhang \newline\indent Department of Mathematics, Zhejiang Normal University, \newline\indent
	Jinhua, Zhejiang, 321004, People's Republic of China}
\email{Z. Shen: szf@zjnu.edu.cn; S. Zhang: shuijinzhang@zjnu.edu.cn}

\subjclass[2010]{35J15, 45E10, 45G05}
\keywords{Fractional Helmholtz equation; Concentration of solutions; Dual variational method; Lusternik-Schnirelman category}.

\begin{abstract}
This paper studies the nonlinear fractional Helmholtz equation
\begin{equation}\label{main}
(-\Delta)^{s} u-k^{2} u=Q(x)|u|^{p-2}u, ~~\mathrm{in}~~\mathbb{R}^{N},~~N\geq3,
\end{equation} 
where $\frac{N}{N+1}<s<\frac{N}{2}$, $\frac{2(N+1)}{N-1}<p<\frac{2N}{N-2s}$ are two real exponents, and the coefficient $Q$ is bounded continuous, nonnegative and satisfies the condition
\begin{equation}
\mathop{\mathrm{lim~sup}}\limits_{|x|\longrightarrow\infty}Q(x)
<\mathop{\mathrm{sup}}\limits_{x\in\mathbb{R}^{N}}Q(x).
\end{equation}
For $k>0$ large, the existence of real-valued solutions for (\ref{main}) are proved, and in the limit $k\longrightarrow\infty$, sequence of solutions associated with ground states of a dual equation are shown to concentrate, after rescaling, at global maximum points of the function $Q$.
\end{abstract}     
\maketitle
%
\setlength{\parindent}{2em}
\section{Introduction and Main Results}
In this paper, we are concerned with the nonlinear fractional Helmholtz equation
\begin{equation}\label{main1}
(-\Delta)^{s} u-k^{2} u=Q(x)|u|^{p-2}u, ~~\mathrm{in}~~\mathbb{R}^{N},~~N\geq3,
\end{equation}
where $\frac{N}{N+1}<s<\frac{N}{2}$, $\frac{2(N+1)}{N-1}<p<\frac{2N}{N-2s}$ are two real exponents, $Q$ is a bounded continuous function.

When $s=1$, the Helmholtz equation
\begin{equation}\label{sin}
-\Delta u-k^{2}u=f(x,u)~~~\mathrm{in}~\mathbb{R}^{N},
\end{equation}
has attracted immense attention in the recent decades due to the importance in Scattering and Optics. The main feature of this problem is that the parameter $k^{2}>0$ is contained in the essential spectrum of negative Laplace $-\Delta$. A general method to detect the existence of weak solutions is the Linking argument, that is find the critical point of the corresponding functional. However, one can not find a appropriate space in where the associated functional of (\ref{sin}) can be well defined, this is due to that the oscillating solutions with slow decay which, in general, are not elements of $H^{1}(\mathbb{R}^{N})$. Therefore, the direct variational approach is invalid.

To overcome this difficult, a dual invariant method has been proposed, which is based on the ``Limiting Absorption Principle''. By constructing the auxiliary problems
\begin{equation}\label{auxiliary}
-\Delta u-(\lambda+i\varepsilon)u=f(x,u)~~~\mathrm{in}~~\mathbb{R}^{N},
\end{equation}
one can obtain the boundedness estimate for the resolvent operator \begin{equation*}
\mathcal{R}_{\lambda,\varepsilon}=(-\Delta-(\lambda+i\varepsilon))^{-1},
\end{equation*}
and as $\varepsilon\longrightarrow 0^{+}$, one can also obtain the boundedness estimate for the resolvent $\mathcal{R}_{\lambda}=(-\Delta-\lambda)^{-1}$, see \cite[Theorem 6]{Gutierrez2004} or see \cite{Mandel2019-1,Odeh1961,Radosz2010,Cossetti2021,Mandel2021}. Based on the boundedness estimate, Ev\'{e}quoz and Weth \cite{Evequoz2015} (see \cite{Evequoz2016}) set up a dual variational framework for (\ref{sin}). Correspondingly, the nontrivial real-valued solutions of equation (\ref{sin}) with $f(x,u)=Q(x)|u|^{p-2}u$ are detected via the mountain pass argument, where $Q(x)$ is a positive weight function, see also \cite{Evequoz2017-2,Evequoz2015-1,Mandel2021-1,Evequoz2020,Mandel2019} for the other cases. By a similar way, Shen and the second author \cite{Yang2023} also obtained the real valued solutions for the fractional Helmholtz equation (\ref{main1}) in the case that $0<k^{2}<+\infty$ and $Q(x)$ is assumed to be a periodic or decay function.

Recently, Ev\'{e}quoz \cite{Evequoz2017-1} considered (\ref{sin}) in a limit case and obtain some surprising results on the solutions. If $Q$ is assumed to be a bounded continuous function,
equation (\ref{sin}) still possess a real valued solution for $k$ large enough. Furthermore, the solutions concentrate at the global maximum points of the function $Q(x)$ as the frequency $\lambda$ tends infinity. Actually, the concentrating solutions is also a big and time honored topic for the nonlinear Schr\"{o}dinger equation
\begin{equation}\label{schrodinger}
-\varepsilon^{2}\Delta u+V(x)u=Q(x)|u|^{p-2}u~~~\mathrm{in}~\mathbb{R}^{N},
\end{equation}
where $V(x)\geq0$ is a potential function. Basically, there are two main routes have been pursued to investigate the concentrating solutions. One is the Lyapunov-Schmidt reduction scheme proposed by Floer and Weinstein \cite{Floer1986}, which has been further extended and combined with variational arguments by Ambrosetti et al. \cite{Ambrosetti1997, Ambrosetti2001,Ambrosetti2003,Ambrosetti2004}, see also for example \cite{Li1997,Pistoia2002} for multibump solutions. Another one is the purely variational approach initiated by Rabinowitz \cite{Rabinowitz1986}, which is mainly relayed by del Pino and Felmer \cite{DelPino1996,DelPino1997,DelPino1998,DelPino2002}. More precisely, under the global condition
\begin{equation}\label{V}
\mathop{\mathrm{lim~inf}}\limits_{|x|\longrightarrow\infty}~V(x)>\mathop{\mathrm{inf}}\limits_{x\in\mathbb{R}^{N}}~V(x),
\end{equation}
it was proved in \cite{Rabinowitz1991} that a ground state (i.e., positive least-energy solution) of (\ref{schrodinger}) exists for small $\varepsilon>0$. In the limit $\varepsilon\longrightarrow0$, Wang \cite{Wang1993} showed that sequences of ground states concentrate at a global minimum point $x_{0}$ of $V$ and converge, after rescaling, toward the ground state of the limit problem
\begin{equation}\label{limit1}
-\Delta u+V(x_{0})u=|u|^{p-2}u~~~\mathrm{in}~\mathbb{R}^{N}.
\end{equation}
These results are also extended to the fractional Shr\"{o}dinger equation, that is
\begin{equation}
\varepsilon^{2s}(-\Delta)^{s}u+V(x)u=f(x,u),~~~\mathrm{in}~\mathbb{R}^{N}.
\end{equation}
We would refer the papers \cite{Davila2014,Chen2014} and \cite{Alves2015,He2016} to readers.

Motivated by these works, we also consider in this paper that the existence and concentrating phenomenon of the solutions of (\ref{main1}) as $k\longrightarrow\infty$. However, as we introduced before, the structure of the Helmholtz equation is vary complex. There is no uniqueness and nondegeneracy result for the real valued solutions, therefore, the classical methods of reduction may be not available to construct the concentrating solutions. This impels us to consider the another method what proposed by Rabinowitz. Actually, the variational method also can not be directly adapt in our case, since there is no natural concept of ground state associated the direct variational formulation.

Follow the idea in \cite{Evequoz2017-1}, we define the dual ground state for (\ref{main1}) as follow. Setting $\varepsilon=k^{-1}$, $u_{\varepsilon}(x)=\varepsilon^{\frac{2s}{p-2}}u(\varepsilon x)$ and $Q_{\varepsilon}(x)=Q(\varepsilon x)$, $x\in\mathbb{R}^{N}$, (\ref{main1}) can be rewritten as
\begin{equation}\label{rescaling}
(-\Delta)^{s} u_{\varepsilon}-u_{\varepsilon}=Q_{\varepsilon}(x)|u_{\varepsilon}|^{p-2}u_{\varepsilon}~~~\mathrm{in}~\mathbb{R}^{N}.
\end{equation}
Furthermore, setting $v=Q^{\frac{1}{p'}}|u_{\varepsilon}|^{p-2}u_{\varepsilon}$, we are led to consider the integral equation
\begin{equation}
|v|^{p-2}v=Q_{\varepsilon}^{\frac{1}{p}}[\mathbf{R}^{s}\ast(Q_{\varepsilon}^{\frac{1}{p}}v)]
\end{equation}
where $p'=\frac{p}{p-1}$ and $\mathbf{R}^{s}$ denotes the real part of the fractional Helmholtz resolvent operator, see \cite{Shen2023}. The solutions of this integral equation are critical points of the so-called dual energy functional $J_{\varepsilon}:L^{p'}(\mathbb{R}^{N})\longrightarrow \mathbb{R}$ given by
\begin{equation}\label{J varepsilon}
J_{\varepsilon}(v)=\frac{1}{p'}\int_{\mathbb{R}^{N}}|v|^{p'}\mathrm{d}x-\frac{1}{2}\int_{\mathbb{R}^{N}}Q_{\varepsilon}^{\frac{1}{p}}v\mathbf{R}^{s}\big(Q_{\varepsilon}^{\frac{1}{p}}v\big)\mathrm{d}x.
\end{equation}
Furthermore, every critical point $v$ of $J_{\varepsilon}$ gives rise to a strong solution $u$ of (\ref{main1}) with $k=\frac{1}{\varepsilon}$, by setting
\begin{equation}\label{solution}
u(x)=k^{\frac{2s}{p-2}}\mathbf{R}^{s}\big(Q_{\varepsilon}^{\frac{1}{p}}v\big)(kx),~~~x\in\mathbb{R}^{N}.
\end{equation}
This correspondence allows us to define a notion of ground state for (\ref{main1}) as follows. If $\varepsilon=\frac{1}{k}$, and $v$ is a nontrivial critical point for $J_{\varepsilon}$ at the mountain pass level, the function $u$ given by (\ref{solution}) will be called a $dual~~ground~~state$ of (\ref{main1}).

Apparently, once we obtain the existence and concentration of $v$, we then obtain the existence and concentration of $u$, up to rescaling, of sequences of dual ground states. There we present our first main result.
\begin{thm}
Let $N\geq3$, $\frac{N}{N+1}<s<\frac{N}{2}$, $\frac{2(N+1)}{N-1}<p<\frac{2N}{N-2s}$ and consider a function $Q$ satisfying

(Q0)~Q is continuous, bounded and $Q\geq 0$ on $\mathbb{R}^{N}$;

(Q1)~$Q_{\infty}:=\mathop{\mathrm{lim~sup}}\limits_{|x|\longrightarrow\infty}~Q(x)<Q_{0}:=\mathop{\mathrm{sup}}\limits_{x\in\mathbb{R}^{N}}~Q(x)$.

\noindent (i)~There is $k_{0}>0$ such that for all $k>k_{0}$, the problem (\ref{main1}) admits a dual ground state.

\noindent (ii)~Let $(k_{n})_{n}\subset(k_{0},\infty)$ satisfy $\mathop{\mathrm{lim}}\limits_{n\longrightarrow\infty}k_{n}=\infty$ and consider for each $n$, a dual ground state $u_{n}$ of
\begin{equation}
(-\Delta)^{s} u-k_{n}^{2}u=Q(x)|u|^{p-2}u~~~\mathrm{in}~\mathbb{R}^{N}.
\end{equation}
Then there is a maximum point $x_{0}$ of $Q$, a dual ground state $u_{0}$ of
\begin{equation}\label{limit equation}
(-\Delta)^{s} u-u=Q_{0}|u|^{p-2}u~~~\mathrm{in}~\mathbb{R}^{N}
\end{equation}
and a sequence $(x_{n})_{n}\subset\mathbb{R}^{N}$ such that (up to a subsequence) $\mathop{\mathrm{lim}}\limits_{n\longrightarrow\infty}x_{n}=x_{0}$ and
\begin{equation}
k_{n}^{-\frac{2}{p-2}}u_{n}\big(\frac{\cdot}{k_{n}}+x_{n}\big)\longrightarrow u_{0}~~~\mathrm{in}~L^{p}(\mathbb{R}^{N}),~~\mathrm{as}~n\longrightarrow\infty.
\end{equation}
\end{thm}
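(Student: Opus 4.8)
The plan is to run the dual variational scheme recalled above in two stages. For (i) I produce $v_\varepsilon$ as a mountain pass critical point of $J_\varepsilon$ on $L^{p'}(\R^{N})$ for $\varepsilon=1/k$ small and convert it via (\ref{solution}) into a dual ground state of (\ref{main1}); for (ii) I analyze a sequence of such dual ground states by concentration--compactness, comparing the mountain pass level $c_\varepsilon$ of $J_\varepsilon$ with the mountain pass level $c_a$ of the constant-coefficient dual functional $J^a$ of $(-\Delta)^{s}u-u=a|u|^{p-2}u$, and showing the concentration point must be a global maximum of $Q$. Throughout I use that $\mathbf{R}^{s}\colon L^{p'}(\R^{N})\to L^{p}(\R^{N})$ is bounded with translation-invariant norm \cite{Shen2023}, and that $a\mapsto c_a$ is \emph{strictly decreasing} on $(0,\infty)$, which is clear from the fibering $J^a(tv)=\tfrac{t^{p'}}{p'}\|v\|_{p'}^{p'}-\tfrac{a^{2/p}t^{2}}{2}\int v\,\mathbf{R}^{s}v$ since $p'<2$.

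\emph{Part (i).} Because $p>\tfrac{2(N+1)}{N-1}$ forces $1<p'<2$, the quadratic part of $J_\varepsilon$ is dominated near the origin by $\tfrac{1}{p'}\|v\|_{p'}^{p'}$, so $J_\varepsilon(0)=0$ and $J_\varepsilon>0$ on a small sphere; and for any $v$ with $\int_{\R^{N}}Q_\varepsilon^{1/p}v\,\mathbf{R}^{s}(Q_\varepsilon^{1/p}v)>0$ (such $v$ exist since $Q\not\equiv0$ by (Q1) and by the sign information on $\mathbf{R}^{s}$ from \cite{Shen2023}) one has $J_\varepsilon(tv)\to-\infty$ as $t\to+\infty$. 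This is the mountain pass geometry, with level $c_\varepsilon>0$ that coincides with $\inf J_\varepsilon$ over the dual Nehari set $\{v\ne0:\|v\|_{p'}^{p'}=\int Q_\varepsilon^{1/p}v\,\mathbf{R}^{s}(Q_\varepsilon^{1/p}v)\}$, on which $J_\varepsilon(v)=(\tfrac1{p'}-\tfrac12)\|v\|_{p'}^{p'}$. I then establish a splitting lemma: a Palais--Smale sequence at level $c_\varepsilon$ is bounded in $L^{p'}$, a dual vanishing lemma forbids vanishing at a positive level, and any mass escaping along translations is governed by the constant coefficient $Q_\infty$, so the strict gap $Q_\infty<Q_0$ together with strict monotonicity of $a\mapsto c_a$ forces strong $L^{p'}$-convergence after translation. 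Hence $c_\varepsilon$ is attained at a nontrivial critical point $v_\varepsilon$, which (\ref{solution}) turns into a dual ground state of (\ref{main1}) for $k=1/\varepsilon$; choosing $k_0$ large enough that this works for all $\varepsilon<1/k_0$ proves (i).

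\emph{Part (ii).} Two level facts are needed: a uniform lower bound $c_\varepsilon\ge c_*>0$ for small $\varepsilon$, from the mountain pass geometry together with the $\varepsilon$-uniform bound $\|\mathbf{R}^{s}\|\,\|Q\|_\infty^{2/p}$ on the quadratic term; and $\limsup_{\varepsilon\to0}c_\varepsilon\le c_{Q_0}$, obtained by testing $J_\varepsilon$ along the ray through $v_\star(\cdot-z/\varepsilon)$, where $v_\star$ is a ground state of $J^{Q_0}$ and $z$ is a maximum point of $Q$: since $Q(\varepsilon\,\cdot+z)^{1/p}v_\star\to Q_0^{1/p}v_\star$ in $L^{p'}$ by dominated convergence, translation invariance and boundedness of $\mathbf{R}^{s}$ give $\max_{t\ge0}J_\varepsilon(tv_\star(\cdot-z/\varepsilon))\to c_{Q_0}$. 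Now let $v_n:=v_{\varepsilon_n}$ be dual ground states, $\varepsilon_n=1/k_n\to0$; they are bounded in $L^{p'}$ (as $\|v_n\|_{p'}^{p'}=c_{\varepsilon_n}/(\tfrac1{p'}-\tfrac12)$), and the dual vanishing lemma together with the bound $c_*$ yields $z_n\in\R^{N}$ with $w_n:=v_n(\cdot+z_n)\rightharpoonup v_0\ne0$ in $L^{p'}$. Set $x_n:=\varepsilon_n z_n$; after a subsequence, $x_n\to x_0\in\R^{N}$ or $|x_n|\to\infty$, and $w_n$ satisfies the translated Euler--Lagrange equation with weight $Q(\varepsilon_n\,\cdot+x_n)$. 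Writing $\hat Q$ for the effective coefficient seen by $v_0$ (equal to $Q(x_0)$ in the first case, and $\le Q_\infty$ in the second, since $Q(\varepsilon_n\,\cdot+x_n)\le Q_\infty+o(1)$ there), the splitting lemma shows that $v_0$ is a nontrivial critical point of $J^{\hat Q}$, whence $c_{\hat Q}\le J^{\hat Q}(v_0)\le\liminf_n J_{\varepsilon_n}(v_n)=\liminf_n c_{\varepsilon_n}\le c_{Q_0}$. Since $a\mapsto c_a$ is strictly decreasing and $\hat Q\le Q_0$ in both cases, this forces $\hat Q=Q_0$: the alternative $|x_n|\to\infty$ (which gives $\hat Q\le Q_\infty<Q_0$) is excluded, and $x_0$ is a maximum point of $Q$. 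Moreover all the displayed inequalities are equalities, so the splitting lemma leaves no residual mass: $w_n\to v_0$ strongly in $L^{p'}$, $v_0$ is a ground state of $J^{Q_0}$, and $c_{\varepsilon_n}\to c_{Q_0}$. Then $Q(\varepsilon_n\,\cdot+x_n)^{1/p}w_n\to Q_0^{1/p}v_0$ in $L^{p'}$, so by boundedness of $\mathbf{R}^{s}$ and unwinding (\ref{solution}) with its rescaling, $k_n^{-2/(p-2)}u_n(\cdot/k_n+x_n)\to u_0:=\mathbf{R}^{s}(Q_0^{1/p}v_0)$ in $L^{p}(\R^{N})$, a dual ground state of (\ref{limit equation}). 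This proves (ii).

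\emph{Main obstacle.} The crux is the compactness machinery for the \emph{nonlocal, sign-indefinite} quadratic form $v\mapsto\int Q_\varepsilon^{1/p}v\,\mathbf{R}^{s}(Q_\varepsilon^{1/p}v)$ on $L^{p'}(\R^{N})$: one must prove a Lions-type vanishing lemma and a Brezis--Lieb-type splitting identity adapted to $\mathbf{R}^{s}$, whose convolution kernel is neither positive nor integrable and decays only at an oscillatory, polynomial rate, and one must control the interplay between the slow scale $\varepsilon_n$ carried by $Q_{\varepsilon_n}$ and the unit scale of $\mathbf{R}^{s}$ — in particular showing that the cross terms between the weak limit (translated back) and the residue are negligible when the translations $z_n$ diverge. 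Once this is in place, the strict monotonicity of $a\mapsto c_a$ together with the strict gap (Q1) makes the localization of the concentration point at a global maximum of $Q$ essentially routine.
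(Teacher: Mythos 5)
Your overall route---dual variational framework, mountain-pass geometry, a nonvanishing/splitting analysis, and a comparison of $c_\varepsilon$ with the constant-coefficient levels $c_{Q_0}$ and $c_{Q_\infty}$---is the same one the paper follows, and your observation that $a\mapsto c_a$ is strictly decreasing is correct and could indeed replace some of the explicit inequalities. The paper however organizes the compactness step differently, and the difference is load-bearing. In its Proposition 4.2 it first sets $v_{0,n}:=(Q_{\varepsilon_n}/Q_0)^{1/p}v_n$, using $Q_{\varepsilon_n}\le Q_0$ pointwise to project onto $\mathcal{N}_0$ with a factor $t_{0,n}\in(0,1]$, and the Nehari identity $J_{\varepsilon_n}(v_n)=(\tfrac1{p'}-\tfrac12)\|v_n\|_{p'}^{p'}$ forces $t_{0,n}\to1$ and makes $(t_{0,n}v_{0,n})_n$ a minimizing sequence for the \emph{constant-coefficient} functional $J_0$ on $\mathcal{N}_0$. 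The splitting lemma it then invokes (Lemma~4.1) is proved only for a translation-invariant, constant-weight functional. You instead apply a splitting lemma directly to $J_{\varepsilon_n}$ with its $n$-dependent weight, and that is where a genuine gap appears.

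Concretely, in the ``escape to infinity'' branch of your part (ii) you claim that $v_0$ is a nontrivial critical point of a constant-coefficient functional $J^{\hat Q}$ with $\hat Q\le Q_\infty$, because $Q(\varepsilon_n\cdot+x_n)\le Q_\infty+o(1)$. But $Q$ is merely continuous and bounded and (Q1) controls only a $\limsup$: when $|x_n|\to\infty$ the values $Q(\varepsilon_n x+x_n)$ need not converge pointwise (even along a subsequence, and certainly not to a constant), so there is no candidate $\hat Q$, and you cannot pass to the limit in $J'_{\varepsilon_n}(v_n(\cdot+z_n))=0$ to obtain $(J^{\hat Q})'(v_0)=0$; the one-sided bound on $Q$ gives a comparison of norms, not of equations. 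The paper avoids forming any limit equation here: using the strong $L^{p'}$-convergence $v_{0,n}(\cdot+y_n)\to w_0$ and the identity $|v_n|^{p'}=(Q_0/Q_{\varepsilon_n})^{p'-1}|v_{0,n}|^{p'}$, it applies Fatou to the norm and gets $c_0\ge(Q_0/Q_\infty)^{p'-1}c_0$, contradicting (Q1). To repair your argument you would either reproduce that Fatou-on-norms comparison or add an assumption that $Q$ has a limit at infinity. A secondary remark on part (i): your ``splitting lemma for $J_\varepsilon$'' is precisely the hard technical input the paper supplies through the annular decay estimate of Lemma~3.1 and the excision argument of Lemma~3.2; you flag this as the main obstacle, but as written the Palais--Smale condition below $c_\infty$ rests on an unproved splitting statement for a nonlocal, sign-indefinite, variable-weight quadratic form, which is not routine and is exactly what those two lemmas are for.
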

Due to the assumption on $Q$, we show that, for some $\varepsilon$ small enough, the dual energy functional is strictly below the least among all possible energy levels for the problem at infinity, see Section 2. Correspondingly, we prove that all the dual energy functional satisfies the Palais-Smale condition, and therefore prove the first assertion of the above theorem, see Section 3. The proof of the second assertion depends on a representation lemma and we finish it in Section 4.

Follow the same idea in \cite[Theorem 2]{Evequoz2017-1}, we can also obtain the multiplicity result for (\ref{main1}). Let $M=\{x\in\mathbb{R}^{N}:Q(x)=Q_{0}\}$ denotes the set of maximum points of $Q$, and for $\delta>0$ we let $M_{\delta}=\{x\in\mathbb{R}^{N}:\mathrm{dist}(x,M)\leq\delta\}$. Also, for a closed subset $Y$ of a metric space $X$ we denote by $\mathrm{cat_{X}}(Y)$ the Lusternik-Schnirelmann category of $Y$ with respect to $X$, i.e., the least number of closed contractible sets in $X$ which cover $Y$.
\begin{thm}\label{thn2}
Let $N\geq3$, $\frac{N}{N+1}<s<\frac{N}{2}$, $\frac{2(N+1)}{N-1}<p<\frac{2N}{N-2s}$ and consider a function $Q$ satisfying (Q0) and (Q1). For every $\delta>0$, there exists $k(\delta)>0$ such that $(\ref{main1})$ has at least $\mathrm{cat}_{M_{\delta}}(M)$ nontrivial solutions for all $k>k_{\delta}$.
\end{thm}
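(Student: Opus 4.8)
The plan is to run the Lusternik--Schnirelmann category machinery of Benci--Cerami, adapted to the dual variational setting exactly as in \cite[Theorem~2]{Evequoz2017-1}. Put $\varepsilon=1/k$ and work with the dual functional $J_\varepsilon$ of \eqref{J varepsilon} on $L^{p'}(\mathbb{R}^N)$, whose nontrivial critical points lie on the dual Nehari manifold $\mathcal{N}_\varepsilon=\{v\in L^{p'}(\mathbb{R}^N)\setminus\{0\}:\langle J_\varepsilon'(v),v\rangle=0\}$, a complete $C^1$ manifold on which $J_\varepsilon(v)=\big(\tfrac1{p'}-\tfrac12\big)\|v\|_{p'}^{p'}>0$. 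Denote by $c_\varepsilon=\inf_{\mathcal{N}_\varepsilon}J_\varepsilon$ the dual ground state level, and by $c_0$, $c_\infty$ the corresponding levels of the constant-coefficient functionals in which $Q_\varepsilon$ is replaced by the constant $Q_0$, resp.\ $Q_\infty$. From Sections~2--3, assumption (Q1) gives $c_0<c_\infty$ and $\limsup_{\varepsilon\to0}c_\varepsilon\le c_0$, and $J_\varepsilon$ satisfies the Palais--Smale condition at every level $c<c_\infty$. Finally fix a minimizer $v_0\in\mathcal{N}_0$ of $J_0$ (which by \eqref{solution} corresponds to a dual ground state of \eqref{limit equation}) and a compactly supported truncation $v_0^R$ with $J_0(v_0^R)\to c_0$ as $R\to\infty$.

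\emph{Step 1: a map from $M$ into low sublevels of $\mathcal{N}_\varepsilon$.} For $y\in M$ and $\varepsilon$ small put $\psi_{\varepsilon,y}(x)=v_0^R(x-y/\varepsilon)$ and let $\Phi_\varepsilon(y)=t_{\varepsilon,y}\psi_{\varepsilon,y}$ be the unique point of $\mathcal{N}_\varepsilon$ on the ray $\mathbb{R}_+\psi_{\varepsilon,y}$; this $\Phi_\varepsilon$ is well defined and continuous on $M$. Since $Q_\varepsilon(x)=Q(\varepsilon x)$, $Q(y)=Q_0$ and $Q$ is continuous, a translation and dominated convergence yield $J_\varepsilon(\Phi_\varepsilon(y))\to J_0(v_0^R)$ uniformly for $y\in M$ as $\varepsilon\to0$. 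Hence, after choosing $R$ large and then $\varepsilon$ small, for every $d>c_0$ there is $\varepsilon_d>0$ with $\Phi_\varepsilon(M)\subset J_\varepsilon^{\,d}:=\{v\in\mathcal{N}_\varepsilon:J_\varepsilon(v)\le d\}$ whenever $0<\varepsilon<\varepsilon_d$.

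\emph{Step 2: a barycenter map from $J_\varepsilon^{\,d}$ into $M_\delta$.} This is the dual-side analogue of the representation lemma behind Theorem~1(ii): if $d$ is close enough to $c_0$, then for every $v\in J_\varepsilon^{\,d}$ the $L^{p'}$-mass of $v$ neither vanishes nor splits, so $v(\cdot+\xi)$ is, up to a small $L^{p'}$-error, a translate of a near-ground-state of $J_0$ with $\varepsilon\xi$ confined to a bounded region where $Q$ is close to $Q_0$. Defining a cut-off barycenter $\beta_\varepsilon(v)=\|v\|_{p'}^{-p'}\int_{\mathbb{R}^N}\varepsilon x\,\chi(\varepsilon x)\,|v(x)|^{p'}\,\mathrm{d}x$, one then gets: for every $\delta>0$ there are $d>c_0$ and $\varepsilon_\delta\in(0,\varepsilon_d)$ with $\beta_\varepsilon(J_\varepsilon^{\,d})\subset M_\delta$ for $0<\varepsilon<\varepsilon_\delta$, and, shrinking $\varepsilon_\delta$ further, $\beta_\varepsilon\circ\Phi_\varepsilon$ is homotopic in $M_\delta$ to the inclusion $M\hookrightarrow M_\delta$.

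\emph{Conclusion and main obstacle.} Fix $\delta>0$ and take $d>c_0$, $\varepsilon_\delta$ from Step~2, additionally requiring $d<c_\infty$ so that $J_\varepsilon$ satisfies Palais--Smale on $J_\varepsilon^{\,d}$. The pair $\Phi_\varepsilon,\beta_\varepsilon$ forces $\mathrm{cat}_{J_\varepsilon^{\,d}}(J_\varepsilon^{\,d})\ge\mathrm{cat}_{M_\delta}(M)$, and the Lusternik--Schnirelmann theorem on the complete $C^1$ manifold $\mathcal{N}_\varepsilon$ then produces at least $\mathrm{cat}_{M_\delta}(M)$ critical points of $J_\varepsilon$ lying in $J_\varepsilon^{\,d}$; each such $v$ yields through \eqref{solution} a nontrivial strong solution of \eqref{main1} with $k=1/\varepsilon$, so $k(\delta):=1/\varepsilon_\delta$ works. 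The main obstacle is Step~2: because $J_\varepsilon$ is nonlocal through $\mathbf{R}^{s}$ and --- in contrast to the Schr\"odinger setting --- the limiting dual ground state is neither unique nor known to be nondegenerate, the concentration of low-energy functions near $M$ has to be obtained purely at the level of the $L^{p'}$-profile via a splitting lemma, with careful control of the cross terms coming from the Helmholtz kernel; everything else is a routine transcription of the classical NLS argument.
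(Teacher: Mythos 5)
Your proposal follows exactly the strategy the paper indicates: it explicitly builds the two maps $\Phi_\varepsilon\colon M\to J_\varepsilon^{\,d}$ (projection of translated truncated ground-state profiles onto $\mathcal{N}_\varepsilon$, as in Lemma~\ref{limit3}) and the cut-off barycenter $\beta_\varepsilon\colon J_\varepsilon^{\,d}\to M_\delta$ (controlled via the splitting/concentration of Proposition~\ref{representation1} together with the nonlocal estimate of Lemma~\ref{nonlocal}), shows $\beta_\varepsilon\circ\Phi_\varepsilon\simeq\iota_{M\hookrightarrow M_\delta}$, and applies Lusternik--Schnirelmann theory on the complete $C^1$ manifold $\mathcal{N}_\varepsilon$ below $c_\infty$, where the Palais--Smale condition of Lemma~\ref{ps condition} is available. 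The paper itself does not write out this argument --- it states only that ``the main point lies in the construction of two maps whose composition is homotopic to the inclusion $M\hookrightarrow M_\delta$'' and defers to \cite[Theorem 2]{Evequoz2017-1} --- so your proposal is a correct expansion of precisely the approach the authors intend, with the right identification of the technical bottleneck (the nonlocal cross terms of $\mathbf{R}^{s}$ in the splitting step).
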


The proof of the above result depends on a topological arguments close to the
ones developed by Cingolani and Lazzo \cite{Cingolani1997} for (\ref{schrodinger}) (see also \cite{Cingolani2000}) and based on ideas of Benci, Cerami and Passaseo \cite{Benci1991,Benci1991-1} for problems on bounded domains. The main point lies in the construction of two maps whose composition is homotopic to the inclusion $M\hookrightarrow M_{\delta}$, we omit it here and refer readers to \cite[Theorem 2]{Evequoz2017-1}.

We close the introduction by some notations. For $1\leq q\leq\infty$, we write $||\cdot||_{q}$ instead $||\cdot||_{L^{q}(\mathbb{R}^{N})}$ for the standard norm of the Lebesgue space $L^{q}(\mathbb{R}^{N})$. In addition, for $r>0$ and $x\in\mathbb{R}^{N}$, we denote by $B_{r}(x)$ the open ball in $\mathbb{R}^{N}$ of radius $r$ centered at $x$, and let $B_{r}=B_{r}(0)$.

\section{The variational framework}
\subsection{Dual Functional}

Before we compare the energy functional, we recall some properties of the dual functional (\ref{J varepsilon}). Since $p'<2$ and since the kernel of the operator $\mathbf{R}^{s}$ is positive close to the origin, the geometry of the functional $J_{\varepsilon}$ is of mountain pass type:
\begin{equation}\label{mountain1}
\exists \alpha>0~~\mathrm{and}~~~ \rho>0~~~\mathrm{such~~that}~~ J_{\varepsilon}(v)\geq\alpha>0,~~\forall~v\in L^{p'}(\mathbb{R}^{N})~~\mathrm{with}~~||v||_{p'}=\rho.
\end{equation}

\begin{equation}\label{mountain2}
\exists v_{0}\in L^{p'}(\mathbb{R}^{N})~~\mathrm{such~~that}~~ ||v_{0}||_{p'}>\rho~~\mathrm{and} ~~J_{\varepsilon}(v_{0})<0.
\end{equation}
As a consequence, the Nehari set associated to $J_{\varepsilon}$:
\begin{equation}
\mathcal{N}_{\varepsilon}:=\{v\in L^{p'}(\mathbb{R}^{N})\setminus\{0\}:J'_{\varepsilon}(v)v=0\},
\end{equation}
is not empty. More precisely, by (\ref{mountain1}), the set
\begin{equation}
U^{+}_{\varepsilon}:=\{v\in L^{p'}(\mathbb{R}^{N}):\int_{\mathbb{R}^{N}}Q_{\varepsilon}^{\frac{1}{p}}v\mathbf{R}^{s}\big(Q_{\varepsilon}^{\frac{1}{p}}v\big)\mathrm{d}x>0\}
\end{equation}
is not empty and for each $v\in U^{+}_{\varepsilon}$ there is a unique $t_{v}>0$ such that $t_{v}v\in\mathcal{N}_{\varepsilon}$ holds. It is given by
\begin{equation}\label{t}
t_{v}^{2-p'}=\frac{\int_{\mathbb{R}^{N}}|v|^{p'}\mathrm{d}x}{\int_{\mathbb{R}^{N}}Q_{\varepsilon}^{\frac{1}{p}}v\mathbf{R}^{s}\big(Q_{\varepsilon}^{\frac{1}{p}}v\big)\mathrm{d}x}.
\end{equation}
In addition, $t_{v}$ is the unique maximum point of $t\mapsto J_{\varepsilon}(tv)$, $t\geq0$. Using (\ref{mountain1}), we obtain in particular
\begin{equation} 
c_{\varepsilon}:=\mathop{\mathrm{inf}}\limits_{\mathcal{N}_{\varepsilon}}~J_{\varepsilon}=\mathop{\mathrm{inf}}\limits_{v\in U^{+}_{\varepsilon}}~J_{\varepsilon}(t_{v}v)>0.
\end{equation}
Moreover, for every $v\in\mathcal{N}_{\varepsilon}$ we have $c_{\varepsilon}\leq J_{\varepsilon}(v)=(\frac{1}{p'}-\frac{1}{2})||v||_{p'}^{p'}$. Hence, $0$ is isolated in the set $\{v\in L^{p'}(\mathbb{R}^{N}):~J'_{\varepsilon}(v_{n})\longrightarrow 0\}$ and as a consequence, the $C^{1}$-submanifold $\mathcal{N}_{\varepsilon}$ of $L^{p'}(\mathbb{R}^{N})$ is complete.

We recall that $(v_{n})_{n}\subset L^{p'}(\mathbb{R}^{N})$ is termed a Palais-Smale sequence, or a (PS)-sequence, for $J_{\varepsilon}$ if $(J_{\varepsilon}(v_{n}))_{n}$ is bounded and $J'_{\varepsilon}(v_{n})\longrightarrow0$ as $n\longrightarrow\infty$. Also, for $d>0$, we say that $(v_{n})_{n}$ is a $(\mathrm{PS})_{d}$-sequence for $J_{\varepsilon}$ if it is a $(\mathrm{PS})$-sequence and if $J_{\varepsilon}\longrightarrow d$ as $n\longrightarrow\infty$. The following properties hold (see \cite[Sect.2]{Shen2023}).

\begin{lem}\label{PS}
Let $(v_{n})_{n}\subset L^{p'}(\mathbb{R}^{N})$ be a Palais-Smale sequence for $J_{\varepsilon}$. Then $(v_{n})_{n}$ is bounded and there exists $v\in L^{p'}(\mathbb{R}^{N})$ such that $J'_{\varepsilon}(v)=0$ and, up to a subsequence, $v_{n}\rightharpoonup v$ weakly in $L^{p'}(\mathbb{R}^{N})$ and $J_{\varepsilon}(v)\leq\mathop{\mathrm{lim~inf}}\limits_{n\longrightarrow\infty}~J_{\varepsilon}(v_{n})$.

Moreover, for every bounded and measurable set $B\subset\mathbb{R}^{N}$, $1_{B}v_{n}\longrightarrow 1_{B}v$ strongly in $L^{p'}(\mathbb{R}^{N})$.
\end{lem}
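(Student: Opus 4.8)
The plan is to establish this Palais--Smale compactness lemma by combining the standard Nehari-type energy identity with the concrete mapping properties of the convolution operator $\mathbf{R}^{s}$, as developed in \cite{Shen2023}. First I would prove boundedness of $(v_{n})_{n}$: using that $(J_{\varepsilon}(v_{n}))_{n}$ is bounded and $J'_{\varepsilon}(v_{n})\to 0$ in $(L^{p'})^{*}=L^{p}$, one writes
\begin{equation*}
J_{\varepsilon}(v_{n})-\tfrac{1}{2}J'_{\varepsilon}(v_{n})v_{n}=\Big(\tfrac{1}{p'}-\tfrac{1}{2}\Big)\|v_{n}\|_{p'}^{p'},
\end{equation*}
and since $p'<2$ the left-hand side is bounded above by $C+o(1)\|v_{n}\|_{p'}$, forcing $\|v_{n}\|_{p'}$ to stay bounded. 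Consequently, along a subsequence $v_{n}\rightharpoonup v$ weakly in the reflexive space $L^{p'}(\mathbb{R}^{N})$.

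Next I would identify the weak limit as a critical point. The key analytic input is that $\mathbf{R}^{s}$ maps $L^{p'}(\mathbb{R}^{N})$ continuously into $L^{p}(\mathbb{R}^{N})$ (the dual-space mapping property used to set up $J_{\varepsilon}$), and that the quadratic form $v\mapsto\int Q_{\varepsilon}^{1/p}v\,\mathbf{R}^{s}(Q_{\varepsilon}^{1/p}v)$ is weakly continuous on $L^{p'}$ when restricted to bounded sets; this is where the local strong convergence stated at the end of the lemma is actually proved and then re-used. Concretely, I would test $J'_{\varepsilon}(v_{n})$ against an arbitrary $\varphi\in L^{p'}(\mathbb{R}^{N})$, namely
\begin{equation*}
J'_{\varepsilon}(v_{n})\varphi=\int_{\mathbb{R}^{N}}|v_{n}|^{p'-2}v_{n}\varphi\,\mathrm{d}x-\int_{\mathbb{R}^{N}}Q_{\varepsilon}^{\frac{1}{p}}\varphi\,\mathbf{R}^{s}\big(Q_{\varepsilon}^{\frac{1}{p}}v_{n}\big)\mathrm{d}x,
\end{equation*}
and pass to the limit: the second integral converges by continuity of $\mathbf{R}^{s}$ together with weak convergence of $v_{n}$, while for the first I would use that $|v_{n}|^{p'-2}v_{n}$ is bounded in $L^{p}$ and converges weakly there once one knows $v_{n}\to v$ a.e. (extracted from the local strong convergence). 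This yields $J'_{\varepsilon}(v)=0$. The inequality $J_{\varepsilon}(v)\leq\liminf_{n}J_{\varepsilon}(v_{n})$ then follows because $\|v\|_{p'}^{p'}\leq\liminf\|v_{n}\|_{p'}^{p'}$ by weak lower semicontinuity of the norm, while the quadratic term converges (not merely semicontinuous), so $J_{\varepsilon}(v)=\frac{1}{p'}\|v\|_{p'}^{p'}-\frac{1}{2}(\text{limit})\le\liminf J_{\varepsilon}(v_{n})$.

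For the final assertion — strong convergence of $1_{B}v_{n}$ in $L^{p'}$ for bounded measurable $B$ — I would exploit the smoothing of $\mathbf{R}^{s}$ on bounded sets. Writing $w_{n}:=\mathbf{R}^{s}(Q_{\varepsilon}^{1/p}v_{n})$, the boundedness of $(v_{n})$ in $L^{p'}$ gives boundedness of $(w_{n})$ in $L^{p}$ and, by the local regularity of the Helmholtz-type resolvent kernel (e.g.\ an embedding of $\mathbf{R}^{s}(L^{p'})$ into a fractional Sobolev or Hölder space on bounded sets, from \cite{Shen2023}), precompactness of $(w_{n})$ in $L^{p}(B')$ for every bounded $B'$; hence $w_{n}\to w$ strongly in $L^{p}_{\mathrm{loc}}$. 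From $J'_{\varepsilon}(v_{n})\varphi\to 0$ with $\varphi=1_{B}(v_{n}-v)$ one gets
\begin{equation*}
\int_{B}\big(|v_{n}|^{p'-2}v_{n}-|v|^{p'-2}v\big)(v_{n}-v)\,\mathrm{d}x
=\int_{B}Q_{\varepsilon}^{\frac{1}{p}}(v_{n}-v)\,(w_{n}-w)\,\mathrm{d}x+o(1)\longrightarrow 0,
\end{equation*}
using the local strong convergence of $w_{n}$ and boundedness of $v_{n}-v$ in $L^{p'}$ on the right. The left-hand side is a standard monotonicity/Clarkson-type quantity controlling $\|1_{B}(v_{n}-v)\|_{p'}$ from below, which forces $1_{B}v_{n}\to 1_{B}v$ in $L^{p'}$. \textbf{The main obstacle} I anticipate is precisely justifying the local compactness of $w_{n}=\mathbf{R}^{s}(Q_{\varepsilon}^{1/p}v_{n})$: one must invoke the right mapping/regularity property of the real part of the fractional Helmholtz resolvent on bounded domains (the Limiting Absorption Principle estimates of \cite{Shen2023}), since the global operator $\mathbf{R}^{s}$ is certainly not compact on all of $\mathbb{R}^{N}$ — this is the whole reason the Helmholtz problem lacks a direct variational treatment — so the argument must be genuinely local and the restriction to bounded $B$ is essential.
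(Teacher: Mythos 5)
Your overall structure is right and you correctly put your finger on the decisive analytic input, namely local compactness of the operator $v\mapsto 1_{B}\,\mathbf{R}^{s}(Q_{\varepsilon}^{1/p}v)$ from $L^{p'}$ into $L^{p}$ (the paper itself defers the proof to \cite{Shen2023}, which follows the Ev\'equoz--Weth pattern). The derivation of boundedness from $J_{\varepsilon}(v_{n})-\tfrac12 J'_{\varepsilon}(v_{n})v_{n}=(\tfrac1{p'}-\tfrac12)\|v_{n}\|_{p'}^{p'}$ is correct, as is the extraction of a weak limit. However, two steps as written are faulty.

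First, your justification of $J_{\varepsilon}(v)\le\liminf J_{\varepsilon}(v_{n})$ is wrong. You claim the quadratic term ``converges (not merely semicontinuous)'' and then write $J_{\varepsilon}(v)=\tfrac1{p'}\|v\|_{p'}^{p'}-\tfrac12(\text{limit})$. The quadratic term $\int Q_{\varepsilon}^{1/p}v_{n}\,\mathbf{R}^{s}(Q_{\varepsilon}^{1/p}v_{n})\,\mathrm{d}x$ does converge along a subsequence (it equals $\|v_{n}\|_{p'}^{p'}-J'_{\varepsilon}(v_{n})v_{n}$), but its limit is \emph{not} $\int Q_{\varepsilon}^{1/p}v\,\mathbf{R}^{s}(Q_{\varepsilon}^{1/p}v)\,\mathrm{d}x$; positive mass can escape to infinity. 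Since the quadratic term enters $J_{\varepsilon}$ with a minus sign, this would produce the inequality in the \emph{wrong} direction. The clean fix: once $J'_{\varepsilon}(v)=0$ is known, $J_{\varepsilon}(v)=(\tfrac1{p'}-\tfrac12)\|v\|_{p'}^{p'}$, while $J_{\varepsilon}(v_{n})=(\tfrac1{p'}-\tfrac12)\|v_{n}\|_{p'}^{p'}+\tfrac12 J'_{\varepsilon}(v_{n})v_{n}$, and the second term vanishes; then weak lower semicontinuity of $\|\cdot\|_{p'}$ gives the result. Relatedly, your parenthetical claim that the quadratic form $v\mapsto\int Q_{\varepsilon}^{1/p}v\,\mathbf{R}^{s}(Q_{\varepsilon}^{1/p}v)\,\mathrm{d}x$ is ``weakly continuous on bounded sets'' is false in general — its failure is exactly why $J_{\varepsilon}$ does not satisfy PS at all levels. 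What is weakly continuous is the bilinear form against a \emph{fixed} test function, which is all your actual computation needs.

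Second, for the local strong convergence you propose a Browder--Minty monotonicity argument by testing $J'_{\varepsilon}(v_{n})$ against $1_{B}(v_{n}-v)$. This can be made to work (via the elementary inequality $(|a|^{p'-2}a-|b|^{p'-2}b)(a-b)\ge c\,|a-b|^{2}(|a|+|b|)^{p'-2}$ and a H\"older step), but it is more circuitous than necessary. The shorter route, and the one used in the reference: from $J'_{\varepsilon}(v_{n})=|v_{n}|^{p'-2}v_{n}-Q_{\varepsilon}^{1/p}\mathbf{R}^{s}(Q_{\varepsilon}^{1/p}v_{n})\to 0$ in $L^{p}$ and local compactness $1_{B}\mathbf{R}^{s}(Q_{\varepsilon}^{1/p}v_{n})\to 1_{B}\mathbf{R}^{s}(Q_{\varepsilon}^{1/p}v)$ in $L^{p}$, one reads off $1_{B}|v_{n}|^{p'-2}v_{n}\to 1_{B}|v|^{p'-2}v$ in $L^{p}$; since $w\mapsto|w|^{p-2}w$ is a continuous inverse of $v\mapsto|v|^{p'-2}v$ between the relevant Lebesgue spaces, this immediately yields $1_{B}v_{n}\to 1_{B}v$ in $L^{p'}$. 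You should also be explicit that this local strong convergence must be established \emph{before} identifying the critical point (to obtain a.e.\ convergence and identify the weak $L^{p}$-limit of $|v_{n}|^{p'-2}v_{n}$); as presented, your ordering leaves that dependence implicit.
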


As a consequence, we obtain the following characterization of the infimum $c_{\varepsilon}$ of $J_{\varepsilon}$ over the Nehari manifold $\mathcal{N}_{\varepsilon}$ (see \cite[Sect.4]{Shen2023}).
\begin{lem}\label{mountain3}
(i)~$c_{\varepsilon}$ coincides with the mountain pass level, i.e.,
\begin{equation*}
c_{\varepsilon}=\mathop{\mathrm{inf}}\limits_{\gamma\in\Gamma}\mathop{\mathrm{max}}\limits_{t\in[0,1]}J_{\varepsilon}(\gamma(t)),~~\mathrm{where}
\end{equation*}
\begin{equation*}
\Gamma=\{\gamma\in C([0,1], L^{p'}(\mathbb{R}^{N})):\gamma(0)=0,~\mathrm{and}~J(\gamma(1))<0\}.
\end{equation*}

(ii)~If $c_{\varepsilon}$ is attained, then $c_{\varepsilon}=\mathrm{min}\{J_{\varepsilon}(v):v\in L^{p'}(\mathbb{R}^{N})\setminus\{0\}, ~J'_{\varepsilon}(v)=0\}$.

(iii)~If $Q_{\varepsilon}$ is constant or $\mathbb{Z}^{N}-$periodic, then $c_{\varepsilon}$ is attained.
\end{lem}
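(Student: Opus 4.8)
The plan is to prove (i) and (ii) by the standard interplay between the mountain pass level and the Nehari manifold of a functional of the form ``$\tfrac1{p'}\|\cdot\|_{p'}^{p'}$ minus a bounded quadratic form'', and to prove (iii) by a concentration-compactness analysis of a minimizing Palais-Smale sequence that exploits the translation (resp.\ $\mathbb{Z}^{N}$) invariance of $J_{\varepsilon}$ when $Q_{\varepsilon}$ is constant (resp.\ periodic). Throughout I write $B_{\varepsilon}(v,w):=\int_{\mathbb{R}^{N}}Q_{\varepsilon}^{1/p}v\,\mathbf{R}^{s}(Q_{\varepsilon}^{1/p}w)\,\mathrm{d}x$ and $g(v):=J_{\varepsilon}'(v)v=\|v\|_{p'}^{p'}-B_{\varepsilon}(v,v)$, so that $\mathcal{N}_{\varepsilon}=\{v\neq0:g(v)=0\}$; the boundedness estimate $\|\mathbf{R}^{s}(Q_{\varepsilon}^{1/p}w)\|_{p}\le C\|w\|_{p'}$ yields $|B_{\varepsilon}(v,v)|\le C'\|v\|_{p'}^{2}$, hence $g(v)=\|v\|_{p'}^{p'}\bigl(1-C'\|v\|_{p'}^{2-p'}\bigr)>0$ for $\|v\|_{p'}$ small and nonzero, since $p'<2$. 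For (i), the first step is to show that every $\gamma\in\Gamma$ meets $\mathcal{N}_{\varepsilon}$: the continuous function $t\mapsto g(\gamma(t))$ is $\ge0$ (and $>0$ unless $\gamma(t)=0$) for $t$ near $0$ because $\|\gamma(t)\|_{p'}\to0$, and it is $<0$ at $t=1$ because $J_{\varepsilon}(\gamma(1))<0$ forces $B_{\varepsilon}(\gamma(1),\gamma(1))>\tfrac2{p'}\|\gamma(1)\|_{p'}^{p'}>\|\gamma(1)\|_{p'}^{p'}$; a standard continuity argument then produces $t_{*}$ with $\gamma(t_{*})\in\mathcal{N}_{\varepsilon}$, so $\max_{t\in[0,1]}J_{\varepsilon}(\gamma(t))\ge J_{\varepsilon}(\gamma(t_{*}))\ge c_{\varepsilon}$, and the mountain pass level is $\ge c_{\varepsilon}$. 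For the reverse inequality, given $v\in U_{\varepsilon}^{+}$ I would pick $T>t_{v}$ with $J_{\varepsilon}(Tv)<0$ and take $\gamma(t)=tTv\in\Gamma$; by (\ref{t}) the function $t\mapsto J_{\varepsilon}(tv)$ attains its unique maximum at $t_{v}$, so $\max_{t\in[0,1]}J_{\varepsilon}(\gamma(t))=J_{\varepsilon}(t_{v}v)$, and taking the infimum over $U_{\varepsilon}^{+}$ gives the mountain pass level $\le\inf_{v\in U_{\varepsilon}^{+}}J_{\varepsilon}(t_{v}v)=c_{\varepsilon}$.

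For (ii), one inclusion is immediate: a nontrivial critical point $v$ of $J_{\varepsilon}$ satisfies $g(v)=J_{\varepsilon}'(v)v=0$, so $v\in\mathcal{N}_{\varepsilon}$ and $J_{\varepsilon}(v)\ge c_{\varepsilon}$. For the other, I would take a minimizer $v^{*}\in\mathcal{N}_{\varepsilon}$ of $c_{\varepsilon}$ and show it is a free critical point of $J_{\varepsilon}$ via the Lagrange multiplier rule. One has $g\in C^{1}(L^{p'}(\mathbb{R}^{N}),\mathbb{R})$, and $g'(v^{*})v^{*}=p'\|v^{*}\|_{p'}^{p'}-2B_{\varepsilon}(v^{*},v^{*})=(p'-2)\|v^{*}\|_{p'}^{p'}<0$, so $g'(v^{*})\neq0$ and $\mathcal{N}_{\varepsilon}$ is a genuine $C^{1}$ constraint near $v^{*}$; hence $J_{\varepsilon}'(v^{*})=\mu\,g'(v^{*})$ for some $\mu\in\mathbb{R}$, and testing with $v^{*}$ gives $0=g(v^{*})=\mu(p'-2)\|v^{*}\|_{p'}^{p'}$, so $\mu=0$ and $J_{\varepsilon}'(v^{*})=0$. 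Thus $v^{*}$ is a nontrivial critical point at level $c_{\varepsilon}$, which shows both that the set in (ii) is nonempty and that $c_{\varepsilon}$ is its minimum.

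For (iii), assume $Q_{\varepsilon}$ is constant (the $\mathbb{Z}^{N}$-periodic case is identical, with lattice translations in place of arbitrary ones). Using (i) together with a standard Ekeland/deformation argument on the complete manifold $\mathcal{N}_{\varepsilon}$, I would first extract a $(\mathrm{PS})_{c_{\varepsilon}}$-sequence $(v_{n})_{n}$ for $J_{\varepsilon}$; since on $\mathcal{N}_{\varepsilon}$ one has $J_{\varepsilon}(v_{n})=(\tfrac1{p'}-\tfrac12)\|v_{n}\|_{p'}^{p'}\to c_{\varepsilon}>0$, the norms $\|v_{n}\|_{p'}$ are bounded and bounded away from $0$, so $B_{\varepsilon}(v_{n},v_{n})=\|v_{n}\|_{p'}^{p'}+o(1)$ is bounded away from $0$ as well. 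The decisive step is a non-vanishing lemma: if $\sup_{y\in\mathbb{R}^{N}}\int_{B_{1}(y)}|v_{n}|^{p'}\,\mathrm{d}x\to0$, then $B_{\varepsilon}(v_{n},v_{n})\to0$, contradicting the previous line; establishing this uses the $L^{p'}\to L^{p}$ mapping property of $\mathbf{R}^{s}$ together with a splitting of its kernel into a locally singular Riesz-type part and a uniformly decaying oscillatory tail, and it is precisely here that the ranges $\tfrac{N}{N+1}<s<\tfrac{N}{2}$ and $\tfrac{2(N+1)}{N-1}<p<\tfrac{2N}{N-2s}$ are needed (cf.\ \cite{Shen2023}). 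Granting non-vanishing, pick $y_{n}\in\mathbb{R}^{N}$ and $\delta>0$ with $\int_{B_{1}(y_{n})}|v_{n}|^{p'}\,\mathrm{d}x\ge\delta$; by translation invariance $\tilde v_{n}:=v_{n}(\cdot+y_{n})$ is again a $(\mathrm{PS})_{c_{\varepsilon}}$-sequence, so by Lemma \ref{PS} it converges weakly (up to a subsequence) to some $\tilde v$ with $J_{\varepsilon}'(\tilde v)=0$, while the strong local $L^{p'}$-convergence in Lemma \ref{PS} forces $\int_{B_{1}(0)}|\tilde v|^{p'}\,\mathrm{d}x\ge\delta$, hence $\tilde v\neq0$ and $\tilde v\in\mathcal{N}_{\varepsilon}$. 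Then $J_{\varepsilon}(\tilde v)\ge c_{\varepsilon}$, while $J_{\varepsilon}(\tilde v)\le\liminf_{n}J_{\varepsilon}(\tilde v_{n})=c_{\varepsilon}$ by Lemma \ref{PS}, so $J_{\varepsilon}(\tilde v)=c_{\varepsilon}$ and $c_{\varepsilon}$ is attained.

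The only genuinely delicate point is the non-vanishing lemma in (iii). Unlike in the Schr\"{o}dinger setting there is no gradient term and hence no local Sobolev compactness to fall back on, so one must extract decay of the quadratic form along a vanishing sequence purely from the mapping and kernel properties of the real part of the fractional Helmholtz resolvent (the Stein-Tomas / Kenig-Ruiz-Sogge type estimates recalled in the cited works); once $g$ and $g'$ are computed, parts (i) and (ii) are routine.
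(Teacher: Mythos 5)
Your proposal is correct and follows the approach the paper itself defers to \cite[Sect.\,4]{Shen2023} (which in turn mirrors the constant/periodic case of Ev\'equoz--Weth \cite{Evequoz2015}): in (i) detect a crossing of every admissible path with $\mathcal{N}_{\varepsilon}$ from the sign change of $g(v)=J_{\varepsilon}'(v)v$ and bound the mountain-pass level from above by fibering along rays $t\mapsto tv$; in (ii) use the Lagrange multiplier computation $g'(v^{*})v^{*}=(p'-2)\|v^{*}\|_{p'}^{p'}\neq0$ to see $\mathcal{N}_{\varepsilon}$ is a natural constraint; in (iii) invoke the nonvanishing theorem for $\mathbf{R}^{s}$ and the translation (resp.\ $\mathbb{Z}^{N}$--translation) invariance of $J_{\varepsilon}$ to recover a nontrivial weak limit on the Nehari manifold at level $c_{\varepsilon}$. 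The only point calling for a word of care is that the intermediate crossing $t_{*}$ in (i) satisfies $\gamma(t_{*})\neq0$, which your remark that $g(\gamma(t))>0$ for small nonzero $\|\gamma(t)\|_{p'}$ already ensures by taking $t_{*}=\sup\{t\in[0,1]:g(\gamma(t))\geq0\}$.
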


\subsection{Energy Compare}

Consider the functional
\begin{equation}
J_{0}(v):=\frac{1}{p'}\int_{\mathbb{R}^{N}}|v|^{p'}dx-\frac{1}{2}\int_{\mathbb{R}^{N}}Q_{0}^{\frac{1}{p}}v\mathbf{R}^{s}(Q_{0}^{\frac{1}{p}}v)\mathrm{d}x,~~v\in L^{p'}(\mathbb{R}^{N})
\end{equation}
and the corresponding Nehari manifold
\begin{equation}
\mathcal{N}_{0}:=\{v\in L^{p'}(\mathbb{R}^{N})\setminus\{0\}:J'_{0}(v)v=0\},
\end{equation}
associated to the limit problem
\begin{equation}\label{limit5}
(-\Delta)^{s} u-u=Q_{0}|u|^{p-2}u,~~x\in\mathbb{R}^{N}.
\end{equation}
Lemma \ref{mountain3} implies that the level $c_{0}:=\mathop{\mathrm{inf}}\limits_{\mathcal{N}_{0}}J_{0}$ is attained and coincides with the least-energy level, i.e.,
\begin{equation}
c_{0}=\mathrm{inf}\{J_{0}(v):v\in L^{p'}(\mathbb{R}^{N}),v\neq0~~\mathrm{and}~J'_{0}(v)=0\}.
\end{equation}

Denote the set of maximum points of $Q$ by
\begin{equation}
M:=\{x\in\mathbb{R}^{N}:Q(x)=Q_{0}\}.
\end{equation}
It then follows from (Q0) and (Q1) that $M\neq \emptyset$. We start by studying the projection on the Nehari manifold of truncation of of translated and rescaled ground states of $J_{0}$. Take a cutoff function $\eta\in C_{c}^{\infty}(\mathbb{R}^{N})$, $0\leq\eta\leq1$, such that $\eta\equiv1$ in $B_{1}(0)$ and $\eta\equiv0$ in $\mathbb{R}^{N}\setminus B_{2}(0)$. For $y\in M$, $\varepsilon>0$ we let
\begin{equation}\label{cutoff}
\varphi_{\varepsilon,y}(x):=\eta(\varepsilon x-y)w(x-\varepsilon^{-1}y),
\end{equation}
where $w\in L^{p'}(\mathbb{R}^{N})$ is some fixed least-energy critical point of $J_{0}$.

\begin{lem}\label{limit3}
There is $\varepsilon^{\ast}>0$ such that for all $0<\varepsilon\leq\varepsilon^{\ast}$, $y\in M$, a unique $t_{\varepsilon,y}>0$ satisfying $t_{\varepsilon,y}\varphi_{\varepsilon,y}\in\mathcal{N}_{\varepsilon}$ exists. Moreover,
\begin{equation}
\mathop{\mathrm{lim}}\limits_{\varepsilon\longrightarrow 0^{+}}J_{\varepsilon}(t_{\varepsilon,y}\varphi_{\varepsilon,y})=c_{0},~~~\mathrm{uniformly~for~}y\in M.
\end{equation}
\end{lem}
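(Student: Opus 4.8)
The plan is to establish three facts: that $\varphi_{\varepsilon,y}$ eventually lands in $U^+_\varepsilon$ so that the projection $t_{\varepsilon,y}$ exists and is unique by the discussion preceding Lemma~\ref{limit3}; that the numerator $\|\varphi_{\varepsilon,y}\|_{p'}^{p'}$ converges to $\|w\|_{p'}^{p'}$ uniformly in $y\in M$; and that the bilinear term $\int_{\R^N}Q_\varepsilon^{1/p}\varphi_{\varepsilon,y}\mathbf{R}^s(Q_\varepsilon^{1/p}\varphi_{\varepsilon,y})\,dx$ converges to $Q_0^{2/p}\int_{\R^N} w\,\mathbf{R}^s(w)\,dx$ uniformly in $y\in M$. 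Granting these, the explicit formula \eqref{t} for $t_v$ forces $t_{\varepsilon,y}\to t_*$ where $t_*$ is the Nehari projection parameter of $w$ for $J_0$; but $w\in\mathcal{N}_0$ already, so $t_*=1$, and then by continuity of the expression \eqref{J varepsilon} under these convergences we get $J_\varepsilon(t_{\varepsilon,y}\varphi_{\varepsilon,y})\to J_0(w)=c_0$, uniformly in $y\in M$.

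First I would change variables. Writing $z=x-\varepsilon^{-1}y$, the function $\varphi_{\varepsilon,y}$ becomes $\eta(\varepsilon z)\,w(z)$, which is independent of $y$ except through the fact that $\varepsilon^{-1}y$ is the translation center; since $\mathbf{R}^s$ is translation-invariant (it is convolution with the kernel $\mathbf{R}^s$), \emph{both} integrals appearing in $J_\varepsilon$ depend on $y$ only through the weight $Q_\varepsilon(x)=Q(\varepsilon x)$, which at $x=z+\varepsilon^{-1}y$ equals $Q(\varepsilon z + y)$. Thus the $p'$-norm term is literally $\int_{\R^N}\eta(\varepsilon z)^{p'}|w(z)|^{p'}\,dz$, with no $y$-dependence at all, and dominated convergence gives $\to\|w\|_{p'}^{p'}$ as $\varepsilon\to0^+$. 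For the bilinear term I would expand
\[
\int_{\R^N} Q(\varepsilon z+y)^{1/p}\,\eta(\varepsilon z)w(z)\;\mathbf{R}^s\!\big(Q(\varepsilon\,\cdot+y)^{1/p}\eta(\varepsilon\,\cdot)w\big)(z)\,dz,
\]
and compare it to $Q_0^{2/p}\int_{\R^N} w\,\mathbf{R}^s(w)\,dz$. Since $y\in M$ means $Q(y)=Q_0$ and $Q$ is continuous, on the support region where $\eta(\varepsilon z)w(z)$ carries most of the mass (i.e. $|\varepsilon z|\le 2$, hence $|z|\le 2\varepsilon^{-1}$, but $w$ decays), one has $Q(\varepsilon z+y)^{1/p}\to Q_0^{1/p}$ pointwise; the needed uniformity in $y\in M$ comes from $Q$ being uniformly continuous on the compact neighborhood $\overline{B_2(0)}+M$ together with the boundedness of $Q$ and the integrability of $w$ and of the kernel $\mathbf{R}^s$ against $L^{p'}$ functions (the mapping properties of $\mathbf{R}^s$ on $L^{p'}\to L^{p}$ from \cite{Shen2023}). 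I would split the difference into a piece controlled by $\|(Q(\varepsilon\cdot+y)^{1/p}-Q_0^{1/p})\eta(\varepsilon\cdot)w\|_{p'}$ — small uniformly in $y$ by uniform continuity of $Q$ near $M$ and dominated convergence — and a piece controlled by $\|(\eta(\varepsilon\cdot)-1)w\|_{p'}$, which is small because $\eta(\varepsilon z)=1$ on the expanding ball $|z|\le\varepsilon^{-1}$ and $w\in L^{p'}$.

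Once both integrals converge uniformly in $y$, uniform positivity of the bilinear limit $Q_0^{2/p}\int w\,\mathbf{R}^s(w)>0$ (which holds since $w\in\mathcal{N}_0$ with $J_0(w)=c_0>0$) shows $\varphi_{\varepsilon,y}\in U^+_\varepsilon$ for all small $\varepsilon$, uniformly in $y$, so $t_{\varepsilon,y}$ is well-defined via \eqref{t}; plugging the two convergences into \eqref{t} gives $t_{\varepsilon,y}^{2-p'}\to \|w\|_{p'}^{p'}/(Q_0^{2/p}\int w\mathbf{R}^s(w))=1$ uniformly, hence $t_{\varepsilon,y}\to1$ uniformly (note $2-p'>0$). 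Finally, since $J_\varepsilon(t_{\varepsilon,y}\varphi_{\varepsilon,y})=(\tfrac1{p'}-\tfrac12)t_{\varepsilon,y}^{p'}\|\varphi_{\varepsilon,y}\|_{p'}^{p'}$ (the Nehari identity), this converges to $(\tfrac1{p'}-\tfrac12)\|w\|_{p'}^{p'}=J_0(w)=c_0$ uniformly in $y\in M$. The main obstacle is the uniformity in $y\in M$: a priori $M$ need only be closed, not compact, so the translation centers $\varepsilon^{-1}y$ run off to infinity, and one must check that the relevant estimates depend on $y$ only through $Q$ restricted to $\overline{B_2(0)}+M$ (where uniform continuity of the bounded continuous $Q$ does hold) and not through the location of the center — this is exactly what the translation-invariance of $\mathbf{R}^s$ buys, and articulating it cleanly, together with the uniform tail control on $w$, is the crux of the argument.
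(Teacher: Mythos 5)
Your argument follows the same route as the paper: change variables so that $\varphi_{\varepsilon,y}$ becomes $\eta(\varepsilon\cdot)w$, use translation invariance of $\mathbf{R}^{s}$, show $Q^{1/p}(y+\varepsilon\cdot)\eta(\varepsilon\cdot)w\to Q_{0}^{1/p}w$ in $L^{p'}$ uniformly for $y\in M$, and push this through \eqref{t} and the Nehari identity to get $t_{\varepsilon,y}\to 1$ and $J_{\varepsilon}(t_{\varepsilon,y}\varphi_{\varepsilon,y})\to c_{0}$. That said, your closing paragraph contains a genuine slip about the uniformity. You say that a priori $M$ is only closed, not compact, and then assert that $Q$ is nevertheless uniformly continuous on $\overline{B_{2}(0)}+M$ because it is bounded and continuous. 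That inference is false in general: a bounded continuous function on an unbounded set need not be uniformly continuous (e.g.\ $\sin(|x|^{2})$). The resolution is that hypothesis (Q1) already forces $M$ to be compact: since $\limsup_{|x|\to\infty}Q(x)<Q_{0}$, there is an $R$ with $Q<Q_{0}$ outside $B_{R}$, so $M\subset \overline{B_{R}}$, and $M$ is closed as a level set of a continuous function. The paper opens its proof with exactly this observation (``Since $M$ is compact and $Q$ is continuous\ldots''), and once $M$ is compact the uniform continuity of $Q$ on $\overline{B_{2}(0)}+M$ is automatic and your argument goes through; but as written, your proposal leaves the compactness of $M$ unestablished and in fact hedges against it, which would break the very uniform-continuity step it relies on.
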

\begin{proof}
Since $M$ is compact and $Q$ is continuous by assumption, we have $Q(y+\varepsilon\cdot)\eta(\varepsilon\cdot)w\longrightarrow Q_{0}w$ in $L^{p'}(\mathbb{R}^{N})$ as $\varepsilon\longrightarrow0^{+}$, uniformly with respect to $y\in M$,  Consequently, as $\varepsilon\longrightarrow0^{+}$,
\begin{equation}
\begin{aligned}
\int_{\mathbb{R}^{N}}Q_{\varepsilon}^{\frac{1}{p}}\varphi_{\varepsilon,y}\mathbf{R}^{s}(Q_{\varepsilon}^{\frac{1}{p}}\varphi_{\varepsilon,y})\mathrm{d}x
&=\int_{\mathbb{R}^{N}}Q^{\frac{1}{p}}(y+\varepsilon z)\eta(\varepsilon z)w(z)\mathbf{R}^{s}(Q^{\frac{1}{p}}(y+\varepsilon\cdot)\eta(\varepsilon\cdot)w)(x)\mathrm{d}z\\
&\longrightarrow \int_{\mathbb{R}^{N}}Q_{0}^{\frac{1}{p}}w\mathbf{R}^{s}(Q_{0}^{\frac{1}{p}}w)\mathrm{d}z=(\frac{1}{p'}-\frac{1}{2})^{-1}c_{0}>0,
\end{aligned}
\end{equation}   
uniformly for $y\in M$. Therefore, for all $y\in M$ and $\varepsilon>0$ small enough, we deduce that $\varphi_{\varepsilon,y}\in U_{\varepsilon}^{+}$, this implies the first assertion with $t_{\varepsilon,y}$ given by (\ref{t}). In addition, for all $y\in M$,
\begin{equation}
\int_{\mathbb{R}^{N}}|\varphi_{\varepsilon,y}|^{p'}\mathrm{d}x=\int_{\mathbb{R}^{N}}|\eta(\varepsilon z)w(z)|^{p'}\mathrm{d}z\longrightarrow \int_{\mathbb{R}^{N}}|w|^{p'}\mathrm{d}z=(\frac{1}{p'}-\frac{1}{2})^{-1}c_{0},~~\mathrm{as}~\varepsilon\longrightarrow0^{+}.
\end{equation}
As a consequence, $t_{\varepsilon,y}\longrightarrow 1$ as $\varepsilon\longrightarrow0^{+}$, uniformly for $y\in M$, and we obtain $J_{\varepsilon}(t_{\varepsilon,y}\varphi_{\varepsilon,y})\longrightarrow c_{0}$ as $\varepsilon\longrightarrow 0^{+}$, uniformly for $y\in M$. The second assertion follows.
\end{proof}

\begin{lem}\label{limit4}
For all $\varepsilon>0$ there holds $c_{0}\leq c_{\varepsilon}$. Moreover, $\mathop{\mathrm{lim}}\limits_{\varepsilon\longrightarrow0^{+}}c_{\varepsilon}=c_{0}$.
\end{lem}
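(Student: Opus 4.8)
The plan is to treat the two assertions in turn; the inequality $c_{0}\leq c_{\varepsilon}$ is the substantive part, while the limit $\lim_{\varepsilon\to0^{+}}c_{\varepsilon}=c_{0}$ will then drop out of Lemma~\ref{limit3}. First I record that, by (Q0) and (Q1), one has $0\leq Q_{\varepsilon}=Q(\varepsilon\,\cdot)\leq Q_{0}$ pointwise and $Q_{0}>Q_{\infty}\geq0$, so $Q_{0}$ is a genuinely positive constant.

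For $c_{0}\leq c_{\varepsilon}$, the key point is that one cannot hope to use any monotonicity in $Q$ of the quadratic form $v\mapsto\int_{\mathbb{R}^{N}}Q_{\varepsilon}^{1/p}v\,\mathbf{R}^{s}(Q_{\varepsilon}^{1/p}v)\,\mathrm{d}x$, since the kernel of $\mathbf{R}^{s}$ is sign-indefinite. Instead, to each $v\in U^{+}_{\varepsilon}$ I would associate the competitor $\tilde v:=(Q_{\varepsilon}/Q_{0})^{1/p}v$. Because $0\leq Q_{\varepsilon}\leq Q_{0}$ we get $|\tilde v|\leq|v|$, hence $\tilde v\in L^{p'}(\mathbb{R}^{N})$ with $\|\tilde v\|_{p'}\leq\|v\|_{p'}$; and because $Q_{0}^{1/p}\tilde v=Q_{\varepsilon}^{1/p}v$ by construction, the quadratic term is literally unchanged, so that
\[
\int_{\mathbb{R}^{N}}Q_{0}^{1/p}\tilde v\,\mathbf{R}^{s}(Q_{0}^{1/p}\tilde v)\,\mathrm{d}x=\int_{\mathbb{R}^{N}}Q_{\varepsilon}^{1/p}v\,\mathbf{R}^{s}(Q_{\varepsilon}^{1/p}v)\,\mathrm{d}x=:B>0,
\]
and in particular $\tilde v\in U^{+}_{0}$. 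Comparing the fibering maps then gives, for every $t\geq0$, $J_{0}(t\tilde v)=\frac{t^{p'}}{p'}\|\tilde v\|_{p'}^{p'}-\frac{t^{2}}{2}B\leq\frac{t^{p'}}{p'}\|v\|_{p'}^{p'}-\frac{t^{2}}{2}B=J_{\varepsilon}(tv)$, since $t^{p'}\geq0$ and $\|\tilde v\|_{p'}^{p'}\leq\|v\|_{p'}^{p'}$; taking the maximum over $t\geq0$ — attained at $t_{\tilde v}$, resp.\ $t_{v}$, by the properties recalled above — I obtain $c_{0}\leq J_{0}(t_{\tilde v}\tilde v)=\max_{t\geq0}J_{0}(t\tilde v)\leq\max_{t\geq0}J_{\varepsilon}(tv)=J_{\varepsilon}(t_{v}v)$. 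Since $c_{\varepsilon}=\inf_{v\in U^{+}_{\varepsilon}}J_{\varepsilon}(t_{v}v)$, passing to the infimum over $v\in U^{+}_{\varepsilon}$ yields $c_{0}\leq c_{\varepsilon}$.

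For the limit, I would fix any $y\in M$: by Lemma~\ref{limit3}, for $\varepsilon$ small one has $t_{\varepsilon,y}\varphi_{\varepsilon,y}\in\mathcal{N}_{\varepsilon}$, hence $c_{\varepsilon}\leq J_{\varepsilon}(t_{\varepsilon,y}\varphi_{\varepsilon,y})$, and letting $\varepsilon\to0^{+}$ and invoking the convergence $J_{\varepsilon}(t_{\varepsilon,y}\varphi_{\varepsilon,y})\to c_{0}$ from that same lemma gives $\limsup_{\varepsilon\to0^{+}}c_{\varepsilon}\leq c_{0}$. Combined with $c_{0}\leq c_{\varepsilon}$ for every $\varepsilon>0$, this forces $\lim_{\varepsilon\to0^{+}}c_{\varepsilon}=c_{0}$.

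I do not expect a serious obstacle here. The only real insight is that $\mathbf{R}^{s}$ is sign-indefinite, so a direct monotonicity argument is unavailable, and that the renormalisation $\tilde v=(Q_{\varepsilon}/Q_{0})^{1/p}v$ sidesteps this: it leaves the quadratic term untouched (so $\tilde v\in U^{+}_{0}$ for free) while only shrinking the $L^{p'}$-norm. The remaining checks — membership $\tilde v\in L^{p'}(\mathbb{R}^{N})$ and the pointwise comparison of the two fibering maps — are immediate consequences of $0\leq Q_{\varepsilon}\leq Q_{0}$, and the limit statement is essentially a restatement of Lemma~\ref{limit3} once the inequality is in hand.
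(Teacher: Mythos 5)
Your proposal is correct and takes essentially the same route as the paper: the same competitor $\tilde v=(Q_{\varepsilon}/Q_{0})^{1/p}v$ is used, leaving the quadratic form unchanged and shrinking the $L^{p'}$-norm, and the limit is deduced from Lemma~\ref{limit3} exactly as in the text. The only cosmetic difference is that the paper starts from $v_{\varepsilon}\in\mathcal{N}_{\varepsilon}$ and computes the explicit Nehari projection $t_{\varepsilon}\leq 1$ of the competitor, while you compare the two fibering maps pointwise in $t$ and pass to maxima; these are equivalent.
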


\begin{proof}
Consider $v_{\varepsilon}\in\mathcal{N}_{\varepsilon}$ and set $v_{0}:=(\frac{Q_{\varepsilon}}{Q_{0}})^{\frac{1}{p}}v_{\varepsilon}$. Notice that $|v_{0}|\leq |v_{\varepsilon}|$ a.e. on $\mathbb{R}^{N}$. Since $v_{\varepsilon}\in U_{\varepsilon}^{+}$, we find
\begin{equation}
\int_{\mathbb{R}^{N}}Q_{0}^{\frac{1}{p}}v_{0}\mathbf{R}^{s}(Q_{0}^{\frac{1}{p}}v_{0})=
\int_{\mathbb{R}^{N}}Q_{\varepsilon}^{\frac{1}{p}}v_{\varepsilon}\mathbf{R}^{s}(Q_{\varepsilon}^{\frac{1}{p}}v_{\varepsilon})
>0,
\end{equation}
i.e., $v_{0}\in U_{0}^{+}$. By (\ref{t}) we deduce
\begin{equation}
t_{\varepsilon}^{2-p'}=\frac{\int_{\mathbb{R}^{N}}|v_{0}|^{p'}\mathrm{d}x}{\int_{\mathbb{R}^{N}}Q_{0}^{\frac{1}{p}}v_{0}\mathbf{R}^{s}(Q_{0}^{\frac{1}{p}}v_{0})\mathrm{d}x}
\leq \frac{\int_{\mathbb{R}^{N}}|v_{\varepsilon}|^{p'}\mathrm{d}x}{\int_{\mathbb{R}^{N}}Q_{0}^{\frac{1}{p}}v_{\varepsilon}\mathbf{R}^{s}(Q_{\varepsilon}^{\frac{1}{p}}v_{\varepsilon})\mathrm{d}x}=1.
\end{equation}
This implies that $t_{\varepsilon}v_{0}\in\mathcal{N}_{0}$. Follow the definition of the dual functional, we yield that
\begin{equation}
c_{0}\leq J_{0}(t_{\varepsilon}v_{0})=(\frac{1}{p'}-\frac{1}{2})t_{\varepsilon}^{p'}\int_{\mathbb{R}^{N}}|v_{0}|^{p'}\mathrm{d}x
\leq(\frac{1}{p'}-\frac{1}{2})\int_{\mathbb{R}^{N}}|v_{\varepsilon}|^{p'}\mathrm{d}x=J_{\varepsilon}(v_{\varepsilon}).
\end{equation}
Since $v_{\varepsilon}\in\mathcal{N}_{\varepsilon}$ was arbitrarily chosen, we conclude that $c_{0}\leq \mathop{\mathrm{inf}}\limits_{\mathcal{N}_{\varepsilon}}=c_{\varepsilon}$. On the other hand, Lemma \ref{limit3} gives for $y\in M$, $c_{\varepsilon}\leq J_{\varepsilon}(t_{\varepsilon,y}\varphi_{\varepsilon,y})\longrightarrow c_{0}$ as $\varepsilon\longrightarrow 0^{+}$. Hence, $\mathop{\mathrm{lim}}\limits_{\varepsilon\longrightarrow0^{+}}c_{\varepsilon}=c_{0}$, as claimed.
\end{proof}

Now, consider the energy functional $J_{\infty}:L^{p'}(\mathbb{R}^{N})\longrightarrow\mathbb{R}$ given by
\begin{equation}
J_{\infty}(v)=\frac{1}{p'}\int_{\mathbb{R}^{N}}|v|^{p'}\mathrm{d}x-\frac{1}{2}\int_{\mathbb{R}^{N}}Q_{\infty}^{\frac{1}{p}}v\mathbf{R}^{s}\big(Q_{\infty}^{\frac{1}{p}}v\big)\mathrm{d}x,~~~v\in L^{p'}(\mathbb{R}^{N}).
\end{equation}
The corresponding Nehari manifold
\begin{equation}
\mathcal{N}_{\infty}:=\{v\in L^{p'}(\mathbb{R}^{N})\setminus\{0\}:J'_{\infty}(v)v=0\},
\end{equation}
has the same structure as $\mathcal{N}_{\varepsilon}$ and, since $Q_{\infty}$ is constant, Lemma
\ref{mountain3} implies that $c_{\infty}:=\mathop{\mathrm{inf}}\limits_{\mathcal{N}_{\infty}}J_{\infty}$ is attained and coincides with the least energy level for nontrivial critical points of $J_{\infty}$.

\begin{Prop}\label{level}
There is $\varepsilon_{0}>0$ such that for all $\varepsilon<\varepsilon_{0}$, $c_{\varepsilon}<c_{\infty}$.
\end{Prop}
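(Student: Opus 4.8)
The plan is to reduce the statement to a \emph{strict} comparison of the two constant-coefficient levels, $c_{0}<c_{\infty}$, and then to invoke Lemma \ref{limit4}, which already gives $c_{\varepsilon}\to c_{0}$ as $\varepsilon\to 0^{+}$. Once $c_{0}<c_{\infty}$ is known, continuity immediately produces an $\varepsilon_{0}>0$ with $c_{\varepsilon}<c_{\infty}$ on $(0,\varepsilon_{0})$, which is the assertion.

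To prove $c_{0}<c_{\infty}$ I would repeat the projection trick from the proof of Lemma \ref{limit4}, now exploiting that $Q_{\infty}<Q_{0}$ is a strict inequality. Suppose first $Q_{\infty}>0$. By the discussion preceding the statement, $c_{\infty}$ is attained at some $w\in\mathcal{N}_{\infty}$; since $Q_{\infty}$ is a positive constant, $\mathbf{R}^{s}$ is linear, and membership in $\mathcal{N}_{\infty}$ forces $Q_{\infty}^{\frac{2}{p}}\int_{\mathbb{R}^{N}} w\,\mathbf{R}^{s} w\,\mathrm{d}x=\|w\|_{p'}^{p'}>0$, we get $\int_{\mathbb{R}^{N}} w\,\mathbf{R}^{s} w\,\mathrm{d}x>0$, so $w\in U_{0}^{+}$ as well. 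Let $t>0$ be the unique number with $tw\in\mathcal{N}_{0}$. By (\ref{t}),
\begin{equation*}
t^{2-p'}=\frac{\int_{\mathbb{R}^{N}}|w|^{p'}\,\mathrm{d}x}{Q_{0}^{\frac{2}{p}}\int_{\mathbb{R}^{N}} w\,\mathbf{R}^{s} w\,\mathrm{d}x}<\frac{\int_{\mathbb{R}^{N}}|w|^{p'}\,\mathrm{d}x}{Q_{\infty}^{\frac{2}{p}}\int_{\mathbb{R}^{N}} w\,\mathbf{R}^{s} w\,\mathrm{d}x}=1,
\end{equation*}
because $Q_{\infty}<Q_{0}$; since $2-p'>0$ this forces $t<1$. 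Using that both $J_{0}$ and $J_{\infty}$ equal $(\tfrac{1}{p'}-\tfrac12)\|\cdot\|_{p'}^{p'}$ on their respective Nehari manifolds,
\begin{equation*}
c_{0}\le J_{0}(tw)=\Big(\tfrac{1}{p'}-\tfrac12\Big)t^{p'}\int_{\mathbb{R}^{N}}|w|^{p'}\,\mathrm{d}x<\Big(\tfrac{1}{p'}-\tfrac12\Big)\int_{\mathbb{R}^{N}}|w|^{p'}\,\mathrm{d}x=J_{\infty}(w)=c_{\infty}.
\end{equation*}
If instead $Q_{\infty}=0$, then $J_{\infty}\ge 0$ has no nontrivial critical point and $\mathcal{N}_{\infty}=\emptyset$, so $c_{\infty}=+\infty$ and the inequality is trivial.

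I do not expect a real obstacle here: the heart of the matter is the elementary monotonicity of the constant-coefficient Nehari level in the coefficient — a larger weight lowers the least energy — grafted onto the continuity statement of Lemma \ref{limit4}. The only points to watch are that a ground state of $J_{\infty}$ genuinely lies in $U_{0}^{+}$ (automatic once $Q_{\infty}>0$, since $U_{\infty}^{+}$ and $U_{0}^{+}$ then coincide) and that the strictness of $Q_{\infty}<Q_{0}$ must be propagated all the way to $t<1$, hence to $c_{0}<c_{\infty}$; strictness is indispensable, since only a strict gap survives the passage $\varepsilon\to 0^{+}$ to yield $c_{\varepsilon}<c_{\infty}$ for small $\varepsilon$.
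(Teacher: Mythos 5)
Your proposal is correct and follows the same route the paper intends: reduce to $c_{0}<c_{\infty}$ and let $c_{\varepsilon}\to c_{0}$ (Lemma \ref{limit4}) do the rest. The paper's own proof is only one sentence, citing Lemma \ref{limit4} and (Q1) and leaving the implication $c_{0}<c_{\infty}$ unargued; you fill that gap explicitly with the projection/monotonicity argument ($t^{2-p'}=(Q_{\infty}/Q_{0})^{2/p}<1$) and also handle the degenerate case $Q_{\infty}=0$, both of which are correct and a genuine tightening of what is written in the paper.
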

\begin{proof}
By Lemma \ref{limit4} and Condition (Q1), there is $\varepsilon_{0}>0$ such that $c_{\varepsilon}<c_{\infty}$ for all $0<\varepsilon<\varepsilon_{0}$.
\end{proof}

\section{Exsitence of dual ground ststes}

In this section, we proof the $(PS)_{c_{\varepsilon}}$ condition for the energy functional $J_{\varepsilon}$. Since the resolvent Helmholtz operator is not positive definite, we need to handle the nonlocal interaction between functions with disjoint support.
\begin{lem}\label{nonlocal}
there exists a constant $C=C(N,p)>0$ such that for any $R>0$, $r\geq1$ and $u,v\in L^{p'}(\mathbb{R}^{N})$ with $\mathrm{supp}(u)\subset B_{R}$ and $\mathrm{supp}(v)\subset \mathbb{R}^{N}\setminus B_{R+r}$,
\begin{equation}
\big|\int_{\mathbb{R}^{N}}u\mathbf{R}^{s}v~\mathrm{d}x\big|\leq Cr^{-\lambda_{p}}||u||_{p'}||v||_{p'},~~~\mathrm{where}~\lambda_{p}=\frac{N-1}{2}-\frac{N+1}{p}.
\end{equation}
\end{lem}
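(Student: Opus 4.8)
The plan is to exploit the explicit decay of the kernel $\mathbf{R}^s$ of the real part of the fractional Helmholtz resolvent, which behaves like $|x-y|^{-(N-1)/2}$ at infinity (oscillating), together with a duality/interpolation argument in the spirit of the Kenig--Ruiz--Sogge and Gutiérrez estimates already invoked in the introduction. First I would record the relevant mapping properties of $\mathbf{R}^s$: from the resolvent estimates cited via \cite{Shen2023,Gutierrez2004}, convolution with $\mathbf{R}^s$ maps $L^{q}(\mathbb{R}^N)$ to $L^{q'}(\mathbb{R}^N)$ for a suitable range of $q$ around the dual exponent, and — crucially — the kernel admits a pointwise bound $|\mathbf{R}^s(z)|\le C|z|^{-(N-1)/2}$ for $|z|\ge 1$ (with a milder, integrable singularity near $0$ in the relevant dimensions, since $\frac{N}{N+1}<s<\frac N2$). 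The point of the lemma is that when $\mathrm{supp}(u)$ and $\mathrm{supp}(v)$ are separated by a corridor of width $r\ge 1$, only the far-field part of the kernel is seen, and one gains the extra power $r^{-\lambda_p}$.

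The key steps, in order: (1) Write $\int u\,\mathbf{R}^s v\,\mathrm{d}x=\int\!\!\int u(x)\,\mathbf{R}^s(x-y)\,v(y)\,\mathrm{d}x\,\mathrm{d}y$; since $|x-y|\ge r\ge 1$ on the support, we may replace $\mathbf{R}^s$ by the truncated kernel $K_r(z):=\mathbf{R}^s(z)\mathbf{1}_{\{|z|\ge r\}}$. (2) Split $K_r$ dyadically: $K_r=\sum_{j\ge 0}K^{(j)}$ where $K^{(j)}$ is supported in the annulus $\{2^j r\le |z|\le 2^{j+1}r\}$, so $\|K^{(j)}\|_{L^\infty}\lesssim (2^j r)^{-(N-1)/2}$. (3) For each dyadic piece, estimate $\big|\int\!\!\int u(x)K^{(j)}(x-y)v(y)\big|$ by Young's inequality for convolutions combined with Hölder: on one hand $\|K^{(j)}*v\|_{p}\le \|K^{(j)}\|_{r_p}\|v\|_{p'}$ with $\frac1p = \frac1{p'}+\frac1{r_p}-1$, i.e. $\frac1{r_p}=2-\frac2p$; then $\|K^{(j)}\|_{r_p}^{r_p}\lesssim (2^jr)^{-(N-1)r_p/2}\cdot (2^j r)^{N}$ since the annulus has volume $\sim (2^jr)^N$. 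This gives $\|K^{(j)}\|_{r_p}\lesssim (2^j r)^{\frac{N}{r_p}-\frac{N-1}{2}}=(2^j r)^{-\lambda_p}$, using $\frac{N}{r_p}=N(1-\frac2p)\cdot$ — precisely the arithmetic that produces $\lambda_p=\frac{N-1}{2}-\frac{N+1}{p}$. (4) Apply Hölder once more to pair with $u$, then sum the geometric series $\sum_j (2^j r)^{-\lambda_p}=r^{-\lambda_p}\sum_j 2^{-j\lambda_p}\lesssim r^{-\lambda_p}$, which converges exactly because the hypothesis $p>\frac{2(N+1)}{N-1}$ is equivalent to $\lambda_p>0$. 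This yields $\big|\int u\,\mathbf{R}^s v\big|\le C r^{-\lambda_p}\|u\|_{p'}\|v\|_{p'}$ with $C=C(N,p)$, independent of $R$.

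The main obstacle is step (3): a crude $L^\infty$ bound on each dyadic kernel piece, fed through Young's inequality, is generally \emph{too lossy} to reach the sharp exponent $\lambda_p$, because it ignores the oscillation of $\mathbf{R}^s$. The honest route is to interpolate between (a) the $L^1\to L^\infty$-type bound from the pointwise kernel decay and (b) the $L^{p'}\to L^{p}$ boundedness of $\mathbf{R}^s$ itself (the Limiting Absorption Principle bound, uniform in the relevant parameters), applied to the truncated kernel $K_r$; scaling then tracks how the operator norm of convolution-with-$K_r$ on $L^{p'}$ decays in $r$, and the interpolation exponent is dictated by the requirement $p>\frac{2(N+1)}{N-1}$. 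Care is also needed with the near-origin behaviour of $\mathbf{R}^s$, but since the corridor has width $\ge 1$ the singular part never enters. I would present the dyadic argument as the clean version and remark that one may alternatively invoke the kernel estimates of \cite{Shen2023} directly; either way the positivity of $\lambda_p$ under the standing assumption on $p$ is what makes the series summable and the constant finite.
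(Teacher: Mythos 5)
Your dyadic decomposition carries a genuine gap that the last paragraph only half-acknowledges. Young's inequality for $\|K^{(j)}*v\|_{p}\le\|K^{(j)}\|_{r_p}\|v\|_{p'}$ forces $1/r_p = 2/p$, i.e.\ $r_p=p/2$ (not $1/r_p=2-2/p$ as you wrote). With the annulus volume $\sim(2^jr)^N$ and the pointwise bound $(2^jr)^{-(N-1)/2}$, this yields $\|K^{(j)}\|_{p/2}\lesssim (2^jr)^{-\frac{N-1}{2}+\frac{2N}{p}}$, not $(2^jr)^{-\lambda_p}$. Since $2N/p>(N+1)/p$ for $N\ge 2$, the exponent you obtain is strictly worse than $-\lambda_p$, and the dyadic series converges only when $p>4N/(N-1)$, a much smaller range than the assumed $p>2(N+1)/(N-1)$. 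This is not a cosmetic slip: no argument that estimates the far-field kernel of $\mathbf{R}^s$ by its absolute value can ever reach $\lambda_p=\frac{N-1}{2}-\frac{N+1}{p}$, because $\lambda_p$ is the Stein--Tomas/restriction exponent and only appears once the oscillation of the kernel is exploited.

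Your fall-back idea of interpolating between $L^1\to L^\infty$ and the LAP bound $L^{p'}\to L^p$ also has a hidden hole: the LAP bound is for convolution with the full kernel $K$, and truncating to $K_r=K\,\mathbf 1_{\{|\cdot|\ge r\}}$ does not a priori preserve $L^{p'}\to L^p$ boundedness, since truncation destroys the Fourier-side localization that the LAP estimate depends on. The paper avoids both problems by a frequency split rather than a dyadic physical-space split: it writes $K=K_1+K_2$ with $\widehat{K_1}$ a smooth bump supported near the unit sphere $\{|\xi|=1\}$. The remainder $K_2$ is Schwartz away from the origin, decaying like $|x|^{-N}$, and for it the crude Young argument does close, giving the rate $r^{-N(p-2)/p}\le r^{-\lambda_p}$. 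For the oscillatory piece $K_1$, after first regularizing $u\mapsto\tilde u=K^{(\mathrm{aux})}*u$ to keep the Fourier support pinned to the sphere, the paper invokes the far-field operator estimate of Ev\'equoz--Weth (\cite[Proposition 3.3]{Evequoz2015}), which is precisely where $\lambda_p$ and the hypothesis $p>\frac{2(N+1)}{N-1}$ enter. That proposition, not Young plus dyadic summation, is the key missing ingredient in your proposal.
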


\begin{proof}
Let $\mathcal{R}^{s}$ denote the resolvent of the Fractional Helmholtz eqaution, which is given by the convolution with the kernel $K(x)$ (see \cite{Yao2016} and \cite{Shen2023} for more details).
Since $\mathbf{R}^{s}$ is the real part of $\mathcal{R}^{s}$ and since $u,v$ are real-valued, we prove the lemma for the nonlocal term $\int_{\mathbb{R}^{N}}v\mathcal{R}^{s}u\mathrm{d}x$. By density, it suffices to prove the estimate for Schwartz function.

Let $M_{R+r}:=\mathbb{R}^{N}\setminus B_{R+r}$ and let $u,v\in\mathcal{S}(\mathbb{R}^{N})$ be such that $\mathrm{supp}(u)\subset B_{R}$ and $\mathrm{supp}(v)\subset M_{R+r}$. The symmetry of the operator $\mathcal{R}^{s}$ and H\"{o}lder's inequality gives
\begin{equation}\label{Holder}
\big|\int_{\mathbb{R}^{N}}u\mathcal{R}^{s}v\mathrm{d}x\big|=\big|\int_{\mathbb{R}^{N}}v\mathcal{R}^{s}u\mathrm{d}x\big|\leq||v||_{p'}||K\ast u||_{L^{p}(M_{R+r})}.
\end{equation}
This reduces us to estimating the second factor on the right-hand side. For this, we decomposition $K$ as follow. Fix $\psi\in\mathcal{S}(\mathbb{R}^{N})$ such that $\widehat{\psi}\in\mathcal{C}_{c}^{\infty}(\mathbb{R}^{N})$ is radial, $0\leq\widehat{\psi}\leq1$, $\widehat{\psi}(\xi)=1$ for $||\xi|-1|\leq\frac{1}{6}$ and $\widehat{\psi}(\xi)=0$ for $||\xi|-1|\geq \frac{1}{4}$. Writing $K=K_{1}+K_{2}$ with
\begin{equation}
K_{1}:=(2\pi)^{-\frac{N}{2}}(
\psi\ast K),~~~~K_{2}:=K-K_{1}.
\end{equation}
It follows from the estimate in \cite{Shen2023} that
\begin{equation}\label{Phi1}
|K_{1}(x)|\leq C_{0}(1+|x|)^{\frac{1-N}{2}}~~\mathrm{for}~x\in\mathbb{R}^{N},
\end{equation}
\begin{equation}\label{Phi2}
\mathrm{and}~~|K_{2}(x)|\leq C_{0}|x|^{-N}~~\mathrm{for}~x\neq0.
\end{equation}
Since the support of $u$ is contained in $B_{R}$, we find
\begin{equation}
\begin{aligned}
||K_{2}\ast u||_{L^{p}(M_{R+r})}&\leq\big[\int_{|x|\geq R+r}\big(\int_{|y|\leq R}|K_{2}(x-y)||u(y)|\mathrm{d}y\big)^{p}\mathrm{d}x\big]^{\frac{1}{p}}\\
&\leq\big[\int_{\mathbb{R}^{N}}\big(\int_{|x-y|\geq r}|K_{2}(x-y)||u(y)|\mathrm{d}y\big)^{p}\mathrm{d}x\big]^{\frac{1}{p}}\\
&=||(1_{M_{r}}|K_{2}|\ast|u|)||_{p}\leq ||1_{M_{r}}K_{2}||_{\frac{p}{2}}||u||_{p'}.
\end{aligned}
\end{equation}
Moreover, (\ref{Phi2}) gives
\begin{equation}
||1_{M_{r}}K_{2}||_{\frac{p}{2}}\leq C_{0}\big(\omega_{N}\int_{r}^{\infty}s^{N-1-\frac{Np}{2}}ds\big)^{\frac{2}{p}}\leq Cr^{-\frac{N(p-2)}{p}}\leq Cr^{-\lambda_{p}},
\end{equation}
since $r\geq 1$, and therefore
\begin{equation}\label{Phi22}
||K_{2}\ast u||_{L^{p}(M_{R+r})}\leq Cr^{-\lambda_{p}}||u||_{p'}.
\end{equation}
It remains to prove the estimate for $K_{1}$. Fix a radial function $K\in\mathcal{S}(\mathbb{R}^{N})$ such that $\widehat{K}\in\mathcal{C}_{c}^{\infty}(\mathbb{R}^{N})$ is radial,
$0\leq\widehat{K}\leq 1$, $\widehat{K}(\xi)=1$ for $||\xi|-1|\leq\frac{1}{4}$ and $\widehat{K}(\xi)=0$ for $||\xi|-1|\geq\frac{1}{2}$, let $\tilde{u}:=K\ast u\in\mathcal{S}(\mathbb{R}^{N})$, we then have $K_{1}\ast u=(2\pi)^{-\frac{N}{2}}(K_{1}\ast\tilde{u})$, since $\widehat{K_{1}}\widehat{K}=\widehat{K_{1}}$ by construction. Now write
\begin{equation}
K_{1}\ast\tilde{u}=\big[1_{B_{\frac{r}{2}}}K_{1}\big]\ast\tilde{u}+\big[1_{M_{\frac{r}{2}}}K_{1}\big]\ast\tilde{u}
\end{equation}
and let $g_{r}:=\big[1_{B_{\frac{r}{2}}}K_{1}\big]\ast K$. Since $\mathrm{supp}(u)\subset B_{R}$, we find as above
\begin{equation}
\begin{aligned}
||1_{M_{r}}g_{r}||_{\frac{p}{2}}^{\frac{p}{2}}&\leq C_{0}^{\frac{p}{2}}\int_{|x|\geq r}\big(\int_{|y|\leq \frac{r}{2}}|K(x-y)|\mathrm{d}y\big)^{\frac{p}{2}}\mathrm{d}x\\
&\leq C\int_{|x|\geq r}\big(\int_{|y|\leq\frac{r}{2}}|x-y|^{-m}\mathrm{d}y\big)^{\frac{p}{2}}\mathrm{d}x\leq C|B_{\frac{r}{2}}|^{\frac{p}{2}}\int_{|x|\geq r}(|x|-\frac{r}{2})^{-\frac{mp}{2}}dx\\
&=Cr^{\frac{(N-m)p}{2}+N}\int_{|z|\geq1}(|z|-\frac{1}{2})^{-\frac{mp}{2}}\mathrm{d}z=Cr^{\frac{(N-m)p}{2}+N},
\end{aligned}
\end{equation}
where $C$ is independent of $r$ and where $m$ may be fixed so large that $\frac{(m-N)p}{2}-N\geq\lambda_{p}$. As a consequence of \cite[Proposition 3.3]{Evequoz2015}, we have moreover
\begin{equation}
||\big[1_{M_{\frac{r}{2}}}K_{1}\big]\ast \tilde{u}||_{L^{p}(M_{R+r})}\leq||\big[1_{M_{\frac{r}{2}}}K_{1}\ast\tilde{u}\big]||_{p}\leq Cr^{-\lambda_{p}}||\tilde{u}||_{p'}\leq Cr^{-\lambda_{p}}||u||_{p'}
\end{equation}
and we conclude that
\begin{equation}\label{Phi11}
||K_{1}\ast u||_{L^{p}(M_{R+r})}\leq Cr^{-\lambda_{p}}||u||_{p'}.
\end{equation}
Combining (\ref{Holder}), (\ref{Phi11}) and (\ref{Phi22}) yields the claim.
\end{proof}
\begin{lem}\label{ps condition}
Let $\varepsilon>0$ and assume $Q_{\infty}>0$ and $c_{\varepsilon}<c_{\infty}$. Then $J_{\varepsilon}$ satisfies the Palais-Smale condition on $\mathcal{N}_{\varepsilon}$ at level below $c_{\infty}$, i.e., every sequence $(v_{n})_{n}\subset\mathcal{N}_{\varepsilon}$ such that $J_{\varepsilon}(v_{n})\longrightarrow d<c_{\infty}$ and $(J_{\varepsilon}|{\mathcal{N}_{\varepsilon}})'(v_{n})\longrightarrow 0$ as $n\longrightarrow\infty$ has a convergent subsequence.
\end{lem}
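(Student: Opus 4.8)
The plan is to run a concentration--compactness argument for the dual functional, splitting the three ingredients of $J_\varepsilon$ --- the $L^{p'}$-norm, the indefinite nonlocal quadratic form, and the derivative --- and using the off-support decay estimate of Lemma~\ref{nonlocal}. First I would reduce to a free Palais--Smale sequence: since $0$ is isolated in $\mathcal N_{\varepsilon}$ and, for $G(v):=J_\varepsilon'(v)v$, one has $G'(v)v=(p'-2)\|v\|_{p'}^{p'}$ bounded away from $0$ on $\mathcal N_{\varepsilon}$, the Lagrange multipliers vanish and $(v_n)$ is a Palais--Smale sequence for $J_\varepsilon$ on $L^{p'}(\R^N)$ at the level $d$. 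By Lemma~\ref{PS}, $(v_n)$ is bounded and, up to a subsequence, $v_n\rightharpoonup v$ in $L^{p'}(\R^N)$ with $J_\varepsilon'(v)=0$, $v_n\to v$ a.e., and $1_Bv_n\to 1_Bv$ in $L^{p'}(\R^N)$ for every bounded measurable $B$. Setting $w_n:=v_n-v\rightharpoonup 0$, the Br\'ezis--Lieb lemma gives $\|v_n\|_{p'}^{p'}=\|v\|_{p'}^{p'}+\|w_n\|_{p'}^{p'}+o(1)$ and $\bigl\||v_n|^{p'-2}v_n-|w_n|^{p'-2}w_n-|v|^{p'-2}v\bigr\|_{p}\to 0$, while bilinearity, self-adjointness of $\mathbf R^s$ and $Q_{\varepsilon}^{1/p}w_n\rightharpoonup 0$ yield the splitting $\int_{\R^N}Q_{\varepsilon}^{1/p}v_n\mathbf R^s(Q_{\varepsilon}^{1/p}v_n)\,\mathrm{d}x=\int_{\R^N}Q_{\varepsilon}^{1/p}v\mathbf R^s(Q_{\varepsilon}^{1/p}v)\,\mathrm{d}x+\int_{\R^N}Q_{\varepsilon}^{1/p}w_n\mathbf R^s(Q_{\varepsilon}^{1/p}w_n)\,\mathrm{d}x+o(1)$. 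Combining these with $J_\varepsilon'(v)=0$ and $v_n\in\mathcal N_{\varepsilon}$ produces $J_\varepsilon'(w_n)\to 0$ in $(L^{p'}(\R^N))^{\ast}$, the relation $\|w_n\|_{p'}^{p'}=\int_{\R^N}Q_{\varepsilon}^{1/p}w_n\mathbf R^s(Q_{\varepsilon}^{1/p}w_n)\,\mathrm{d}x+o(1)$, and $J_\varepsilon(w_n)\to d-J_\varepsilon(v)$, where $J_\varepsilon(v)=(\tfrac1{p'}-\tfrac12)\|v\|_{p'}^{p'}\ge 0$.

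Next I would argue by the vanishing/non-vanishing alternative for $(w_n)$ in $L^{p'}(\R^N)$. If $\sup_{y\in\R^N}\int_{B_1(y)}|w_n|^{p'}\,\mathrm{d}x\to 0$, then the nonlocal vanishing lemma --- which follows, in the spirit of Lemma~\ref{nonlocal}, from the kernel estimates for $\mathbf R^s$ --- forces $\int_{\R^N}Q_{\varepsilon}^{1/p}w_n\mathbf R^s(Q_{\varepsilon}^{1/p}w_n)\,\mathrm{d}x\to 0$, hence $\|w_n\|_{p'}\to 0$ by the relation above, and we are done. Otherwise there are $\delta>0$ and $(y_n)\subset\R^N$ with $\int_{B_1(y_n)}|w_n|^{p'}\,\mathrm{d}x\ge\delta$; since $1_Bw_n\to 0$ in $L^{p'}(\R^N)$ for every bounded $B$, necessarily $|y_n|\to\infty$, and $\tilde w_n:=w_n(\cdot+y_n)\rightharpoonup\tilde w\neq 0$ in $L^{p'}(\R^N)$.

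Because $|\varepsilon y_n|\to\infty$ and $Q$ is bounded and continuous, after a further subsequence the translated coefficients $Q(\varepsilon(\cdot+y_n))$ converge locally (and a.e.) to some $\bar Q$ with $0\le\bar Q\le Q_{\infty}$, by the definition of $Q_{\infty}$ in (Q1). Passing to the limit in the equation satisfied by $\tilde w_n$ (the translate of $J_\varepsilon'(w_n)\to 0$), and using Lemma~\ref{nonlocal} to control the nonlocal interaction between the bulk near the origin and the far tails, shows that $\tilde w$ is a nontrivial critical point of the limit functional $J_{\bar Q}$, so $J_{\bar Q}(\tilde w)\ge c_{\bar Q}$. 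The monotonicity argument of Lemma~\ref{limit4} (applied with a critical point of $J_{\bar Q}$ in place of $v_\varepsilon$ and with $\bar Q,Q_{\infty}$ in place of $Q_{\varepsilon},Q_0$) then gives $c_{\bar Q}\ge c_{Q_{\infty}}=c_{\infty}$. Applying the splittings of the first paragraph once more to $\tilde w_n=\tilde w+\zeta_n$, together with $J_{\bar Q}'(\tilde w)=0$, yields $J_\varepsilon(w_n)=J_{\bar Q}(\tilde w)+(\tfrac1{p'}-\tfrac12)\|\zeta_n\|_{p'}^{p'}+o(1)\ge c_{\infty}+o(1)$, whence $d=J_\varepsilon(v)+\lim_nJ_\varepsilon(w_n)\ge 0+c_{\infty}$, contradicting $d<c_{\infty}$. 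Therefore $w_n\to 0$, i.e. $v_n\to v$ in $L^{p'}(\R^N)$; since $J_\varepsilon(v_n)\to d\ge c_{\varepsilon}>0$ forces $\|v\|_{p'}>0$, and $\mathcal N_{\varepsilon}$ is closed in $L^{p'}(\R^N)\setminus\{0\}$, we conclude $v\in\mathcal N_{\varepsilon}$ and $(v_n)$ has a convergent subsequence.

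The step I expect to be the main obstacle is the non-vanishing alternative: since $\mathbf R^s$ is not positive definite, the nonlocal quadratic form does not localize for free, and one must use the off-support decay estimate of Lemma~\ref{nonlocal} together with the kernel decomposition $K=K_1+K_2$ to split $\int_{\R^N}Q_{\varepsilon}^{1/p}w_n\mathbf R^s(Q_{\varepsilon}^{1/p}w_n)\,\mathrm{d}x$ into a contribution carried near $y_n$ --- governed asymptotically by the constant $Q_{\infty}$ --- plus negligible cross terms, and likewise to establish the vanishing lemma invoked in the first alternative; some care is also needed in handling the mode of convergence of the translated coefficients $Q(\varepsilon(\cdot+y_n))$.
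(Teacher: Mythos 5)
Your argument takes a genuinely different route from the paper's: you run a profile-decomposition (concentration–compactness with weak-limit subtraction, in the spirit of Lemma~\ref{representation}), whereas the paper works directly with a far-field truncation. Specifically, the paper assumes a fixed amount of mass escapes to infinity, finds (via a pigeonhole on dyadic annuli) an annulus $\{r<|x|<2r\}$ carrying little mass, sets $w_{n_k}=1_{\{|x|\geq 2r\}}v_{n_k}$, uses Lemma~\ref{nonlocal} to bound the nonlocal cross-term between the tail and the bulk by $C r^{-\lambda_p}+C\eta^{1/p'}$, and then projects $(Q_\varepsilon/Q_\infty)^{1/p}w_{n_k}$ onto $\mathcal N_\infty$, reaching $c_\infty\le d$. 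The only information about $Q$ used beyond boundedness is the inequality $Q_\varepsilon\le Q_\infty+\eta$ on $\{|x|\ge R(\eta)\}$.

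The gap in your version is precisely the step you flagged at the end: you extract a subsequence along which the translated coefficients $Q(\varepsilon(\cdot+y_n))$ converge locally and a.e.\ to some $\bar Q\le Q_\infty$, and then treat $\tilde w$ as a critical point of a limit functional $J_{\bar Q}$. Under the standing hypothesis (Q0), $Q$ is only bounded and continuous, not uniformly continuous, so the translates along $|y_n|\to\infty$ need not converge pointwise or even along a subsequence; a bounded sequence in $L^\infty_{\rm loc}$ gives you weak-$*$ compactness, not a.e.\ convergence, and without the latter you cannot pass to the limit in the nonlinearity $|\tilde w_n|^{p'-2}\tilde w_n$ weighted against $Q_\varepsilon^{1/p}(\cdot+y_n)$ to produce a bona fide limit equation. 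This is also why the paper's Lemma~\ref{representation} is formulated only for constant $Q$: without translation invariance of $J_\varepsilon$ the subtracted remainder $v_n^{(1)}-w^{(1)}$ is no longer a (PS) sequence for the \emph{same} functional. The fix is what the paper does: forgo the limit function $\bar Q$, retain only the tail bound $Q_\varepsilon\le Q_\infty+\eta$, and compare against $\mathcal N_\infty$ via a multiplicative projection $(Q_\varepsilon/Q_\infty)^{1/p}w_{n_k}\mapsto t^\infty_k\,(Q_\varepsilon/Q_\infty)^{1/p}w_{n_k}\in\mathcal N_\infty$ with $t^\infty_k\le(1+\eta/Q_\infty)^{(p'-1)/(2-p')}(1+o(1))$. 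If you rework your second alternative along those lines, removing the appeal to a pointwise limit $\bar Q$ and instead comparing the quadratic form of the (truncated or translated) tail directly to the one with constant weight $Q_\infty+\eta$, the argument closes.
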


\begin{proof}
Since $c_{\varepsilon}<c_{\infty}$, the set $\{v\in\mathcal{N}_{\varepsilon}:J_{\varepsilon}(v)<c_{\infty}\}$ is not empty. If $d<c_{\varepsilon}$, all is done. It remains to consider the case $c_{\varepsilon}\leq d<c_{\infty}$. Let $(v_{n})_{n}$ be a $(\mathrm{PS})_{d}$-sequence  for $J_{\varepsilon}|{\mathcal{N}_{\varepsilon}}$. Since $\mathcal{N}_{\varepsilon}$ is a natural constraint and a $C^{1}-$manifold, we find that $(v_{n})_{n}$ is a $(PS)_{d}$-sequence for the unconstrained functional $J_{\varepsilon}$. Using Lemma \ref{PS}, we obtain that (up to a subsequence) $v_{n}\rightharpoonup v$ and $1_{B_{R}}v_{n}\longrightarrow 1_{B_{R}}v$ in $L^{p'}(\mathbb{R}^{N})$ for all $R>0$, where $v\in L^{p'}(\mathbb{R}^{N})$ is a critical point of $J_{\varepsilon}$ with $J_{\varepsilon}(v)\leq d$. In order to conclude that $v_{n}\longrightarrow v$ strongly in $L^{p'}(\mathbb{R}^{N})$, it suffices to show that
\begin{equation}\label{suffice}
\forall~\zeta>0,~~\exists~R>0~~~\mathrm{such~that}~~\int_{|x|>R}|v_{n}|^{p'}dx<\zeta,~~~\forall~~n.
\end{equation}
We prove (\ref{suffice}) by contradiction. Assuming that there exists a subsequence $(n_{n_{k}})_{k}$ and $\zeta_{0}>0$ such that
\begin{equation}\label{contradiction}
\int_{|x|>k}|v_{n_{k}}|^{p'}\geq \zeta_{0},~~~\forall~k.
\end{equation}

Firstly, for a annular region, we claim that
\begin{equation}\label{claim}
\forall~\eta>0~~\mathrm{and}~~\forall~R>0,~~~\exists~r>R~~~\mathrm{such~that}~~~\mathop{\mathrm{lim~inf}}\limits_{n\longrightarrow\infty}
\int_{r<|x|<2r}|v_{n}|^{p'}\mathrm{d}x<\eta.
\end{equation}
Otherwise, for every $m>R_{0}$, $n_{0}=n_{0}(m)$, we can find $\eta_{0},R_{0}$ such that  $\int_{m<|x|<2m}|v_{n}|^{p'}\mathrm{d}x\geq \eta_{0}$ for all $n\geq n_{0}$. Without loss of generality, we assume that $n_{0}(m+1)\geq n_{0}(m)$ for all $m$. Hence, for every $l\in\mathbb{N}$ there is $N_{0}=N_{0}(l)$ such that
\begin{equation}
\int_{\mathbb{R}^{N}}|v_{n}|^{p'}\mathrm{d}x\geq \mathop{\sum}\limits_{k=0}^{l-1}\int_{2^{k}([R_{0}]+1)<|x|<2^{k+1}([R_{0}]+1)}|v_{n}|^{p'}\mathrm{d}x\geq l\eta_{0},~~~\forall~n\geq N_{0}.
\end{equation}
Letting $l\longrightarrow\infty$, we obtain a contradiction to the fact that $(v_{n})_{n}$ is bounded and this gives (\ref{claim}).

Now fix $0<\eta<\mathrm{min}\{1,(\frac{\zeta_{0}}{3C_{1}})^{p'}\}$, where $C_{1}=2C(N,p)||Q||_{\infty}^{\frac{2}{p}}\mathrm{max}\{1,\mathop{\mathrm{sup}}\limits_{k\in\mathbb{N}}||v_{n_{k}}||_{p'}^{2}\}$,
the constant $C(N,p)$ being chosen such that Lemma \ref{nonlocal} holds and $||\mathbf{R}^{s}v||_{p}\leq C(N,p)||v||_{p'}$ for all $u\in L^{p'}(\mathbb{R}^{N})$. By definition of $Q_{\infty}$ and since $\varepsilon>0$ is fixed, there exists $R(\eta)>0$ such that
\begin{equation}
Q_{\varepsilon}\leq Q_{\infty}+\eta~~~\mathrm{for~all}~|x|\geq R(\eta).
\end{equation}
Also, from (\ref{claim}), we can find $r>\mathrm{max}\{R(\eta),\eta^{-\frac{1}{\lambda_{p}}}\}$ and a subsequence, still denoted by $(v_{n_{k}})_{k}$, such that
\begin{equation}
\int_{r<|x|<2r}|v_{n_{k}}|^{p'}\mathrm{d}x<\eta~~~~\mathrm{for~all}~k.
\end{equation}
Setting $w_{n_{k}}:=1_{\{|x|\geq2r\}}v_{n_{k}}$ we can write for all $k$
\begin{equation}  
\begin{aligned}
\big|J'_{\varepsilon}(v_{n_{k}})w_{n_{k}}-J'_{\varepsilon}(w_{n_{k}})w_{n_{k}}\big|
&=\big|\int_{|x|<r}Q_{\varepsilon}^{\frac{1}{p}}
v_{n_{k}}\mathbf{R}^{s}\big(Q_{\varepsilon}^{\frac{1}{p}}w_{n_{k}}\big)\mathrm{d}x+\int_{r<|x|<2r}Q_{\varepsilon}^{\frac{1}{p}}v_{n_{k}}\mathbf{R}^{s}\big(Q_{\varepsilon}^{\frac{1}{p}}w_{n_{k}}\big)\mathrm{d}x\big|\\
&\leq C(N,p)r^{-\lambda_{p}}||Q||_{\infty}^{\frac{2}{p}}||v_{n_{k}}||_{p'}^{2}+C(N,p)||Q||_{\infty}^{\frac{2}{p}}||v_{n_{k}}||_{p'}\big(\int_{r<|x|<2r}|v_{n_{k}}|^{p'}\mathrm{d}x\big)^{\frac{1}{p'}}\\
&\leq C_{1}\eta^{\frac{1}{p'}},
\end{aligned}
\end{equation}
using Lemma \ref{nonlocal}. In addition, by (\ref{contradiction}) and the definition of $w_{n_{k}}$, there holds
\begin{equation}
\int_{\mathbb{R}^{N}}|w_{n_{k}}|^{p'}\mathrm{d}x\geq \zeta_{0}~~~\mathrm{for~all}~k\geq 2r.
\end{equation}

Recalling our choice of $\eta$, we know that $C_{1}\eta^{\frac{1}{p'}}<\frac{\zeta_{0}}{3}$,
and we find some $k_{0}=k_{0}(r,\eta,\zeta_{0})\geq 2r$ such that
\begin{equation}\label{part1}
\begin{aligned}
\int_{\mathbb{R}^{N}}Q_{\varepsilon}^{\frac{1}{p}}w_{n_{k}}\mathbf{R}^{s}\big(Q_{\varepsilon}^{\frac{1}{p}}w_{n_{k}}\big)\mathrm{d}x
&=\int_{\mathbb{R}^{N}}|w_{n_{k}}|^{p'}\mathrm{d}x-J'_{\varepsilon}(v_{n_{k}})w_{n_{k}}+[J'_{\varepsilon}(v_{n_{k}})w_{n_{k}}-J'_{\varepsilon}(w_{n_{k}})w_{n_{k}}]\\
&\geq\int_{\mathbb{R}^{N}}|w_{n_{k}}|^{p'}\mathrm{d}x-|J'_{\varepsilon}(v_{n_{k}})w_{n_{k}}|-C_{1}\eta^{\frac{1}{p'}}\geq\frac{\zeta_{0}}{2},~~~\mathrm{for~~all}~k\geq k_{0},
\end{aligned}
\end{equation}
since $J'_{\varepsilon}(v_{n_{k}})w_{n_{k}}\longrightarrow0$ as $k\longrightarrow\infty$.

For $k\geq k_{0}$, let now $\tilde{w}_{k}:=(\frac{Q_{\varepsilon}}{Q_{\infty}})^{\frac{1}{p}}w_{n_{k}}$ and notice that $|\tilde{w}_{k}|\leq (1+\frac{\eta}{Q_{\infty}})^{\frac{1}{p}}|w_{n_{k}}|$. In view of (\ref{part1}), there is $t_{k}^{\infty}>0$, for which $t^{\infty}_{k}\tilde{w}_{k}\in\mathcal{N}_{\infty}$ and there holds
\begin{equation}
\begin{aligned}
(t_{k}^{\infty})^{2-p'}&\leq \frac{(1+\frac{\eta}{Q_{\infty}})^{p'-1}\int_{\mathbb{R}^{N}}|w_{n_{k}}|^{p'}\mathrm{d}x}{\int_{\mathbb{R}^{N}}Q_{\varepsilon}^{\frac{1}{p}}w_{n_{k}}\mathbf{R}^{s}(Q_{\varepsilon}^{\frac{1}{p}}w_{n_{k}})\mathrm{d}x}\\
&\leq(1+\frac{\eta}{Q_{\infty}})^{p'-1}\big(1+\frac{|J'_{\varepsilon}(w_{n_{k}})w_{n_{k}}|+C_{1}\eta^{\frac{1}{p'}}}{\int_{\mathbb{R}^{N}}Q_{\varepsilon}^{\frac{1}{p}}w_{n_{k}}\mathbf{R}^{s}(Q_{\varepsilon}^{\frac{1}{p}}w_{n_{k}})\mathrm{d}x}   \big)\\
&\leq(1+\frac{\eta}{Q_{\infty}})^{p'-1}\big(1+\frac{2|J'_{\varepsilon}(w_{n_{k}})w_{n_{k}}+2C_{1}\eta^{\frac{1}{p'}}|}{\zeta_{0}}\big).
\end{aligned}
\end{equation}

Since $v_{n_{k}}\in\mathcal{N}_{\varepsilon}$, there holds
\begin{equation}\label{part2}
\int_{\mathbb{R}^{N}}|w_{n_{k}}|^{p'}\mathrm{d}x\leq \int_{\mathbb{R}^{N}}|v_{n_{k}}|^{p'}\mathrm{d}x=(\frac{1}{p'}-\frac{1}{2})^{-1}J_{\varepsilon}(v_{n_{k}}).
\end{equation}
Consequently, for all $k\geq k_{0}$,
\begin{equation}
\begin{aligned}
c_{\infty}&\leq J_{\infty}(t_{k}^{\infty}\tilde{w}_{k})\\
&\leq(\frac{1}{p'}-\frac{1}{2})(t_{k}^{\infty})^{p'}(1+\frac{\eta}{Q_{\infty}})^{p'-1}\int_{\mathbb{R}^{N}}|w_{n_{k}}|^{p'}\mathrm{d}x\\
&\leq(1+\frac{\eta}{Q_{\infty}})^{\frac{2(p'-1)}{2-p'}}\big(1+\frac{2|J'_{\varepsilon}(w_{n_{k}})w_{n_{k}}+2C_{1}\eta^{\frac{1}{p'}}|}{\zeta_{0}}\big)^{\frac{p'}{2-p'}}J_{\varepsilon}(v_{n_{k}}).
\end{aligned}
\end{equation}
Letting $k\longrightarrow\infty$, we find
\begin{equation}
c_{\infty}\leq(1+\frac{\eta}{Q_{\infty}})^{\frac{2(p'-1)}{2-p'}}\big(1+\frac{2C_{1}\eta^{\frac{1}{p'}}}{\zeta_{0}}\big)^{\frac{p'}{2-p'}}d,
\end{equation}
and letting $\eta\longrightarrow0$ we obtain
\begin{equation}
c_{\infty}\leq d,
\end{equation}
which contradicts the assumption $d<\infty$ and prove (\ref{suffice}). From this, we conclude the strong convergence $v_{n}\longrightarrow v$ in $L^{p'}(\mathbb{R}^{N})$ and the assertion follows.
\end{proof}

\begin{proof}[\bf Proof of Theorem 1.1 (i)]
Fix $\varepsilon_{0}$ in Proposition (\ref{level}). For any $\varepsilon\leq\varepsilon_{0}$, using the fact that $\mathcal{N}_{\varepsilon}$ is a $C^{1}-$submanifold of $L^{p'}(\mathbb{R}^{N})$, we obtain from Ekeland's variational principle the existence of Palais-Smale sequence for $J_{\varepsilon}$ on $\mathcal{N}_{\varepsilon}$, at level $c_{\varepsilon}$, and by Lemma \ref{ps condition}, $c_{\varepsilon}$ is attained.
\end{proof}

\section{Concentration of dual ground states}

To show the concentration behaviour of the solutions of (\ref{main1}), we first prove a representation lemma for the Palais-Smale sequences of the functional $J_{\varepsilon}$, which is in the spirit of and Benci and Cerami \cite{Benci1987}. A crucial ingredient related to the nonlocal quadratic part of the energy functional is the nonvanishing theory proved in \cite[Sect.4]{Shen2023}. For simplicity, we drop the subscript $\varepsilon$.

\begin{lem}\label{representation}
Suppose $Q\equiv Q(0)>0$ on $\mathbb{R}^{N}$. Consider for some $d>0$ a $(PS)_{d}$-sequence $(v_{n})_{n}\subset L^{p'}(\mathbb{R}^{N})$ for $J$. Then there is an integer $m\geq1$, critical points $w^{(1)},...,w^{(m)}$ of $J$ and sequence $(x_{n}^{(1)})_{n},....,(x_{n}^{(m)})_{n}\subset\mathbb{R}^{N}$ such that (up to a subsequence)
\begin{equation}
\left \{
\begin{aligned}
&||v_{n}-\mathop{\sum}\limits_{j=1}^{m}w^{j}(\cdot-x_{n}^{j})||_{p'}\longrightarrow 0,~~~\mathrm{as}~n\longrightarrow\infty,\\
&|x_{n}^{(i)}-x_{n}^{(j)}|\longrightarrow\infty~~~\mathrm{as}~n\longrightarrow\infty,~\mathrm{if}~i\neq j,\\
&\mathop{\sum}\limits_{j=1}^{m}J(w^{(j)})=d.\\
\end{aligned}
\right.
\end{equation}
\end{lem}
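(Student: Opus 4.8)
The plan is to run the standard concentration–compactness / profile-decomposition argument adapted to the dual functional $J$, with the nonlocal interaction estimate of Lemma~\ref{nonlocal} playing the role that the Brezis–Lieb/Lions lemma plays in the local theory. By Lemma~\ref{PS} the $(PS)_d$-sequence $(v_n)_n$ is bounded in $L^{p'}(\mathbb{R}^N)$ and we may pass to a subsequence with $v_n\rightharpoonup v$, where $v$ is a critical point of $J$; moreover $1_B v_n\to 1_B v$ strongly on every bounded measurable $B$. Set $v_n^{(1)}:=v_n-v$. If $v_n^{(1)}\to 0$ strongly we are done with $m=1$, $w^{(1)}=v$, $x_n^{(1)}=0$ (the energy identity $J(v)=d$ following from strong convergence). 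Otherwise $\|v_n^{(1)}\|_{p'}\not\to 0$, and the crucial input is the nonvanishing theorem from \cite[Sect.~4]{Shen2023}: a bounded sequence whose nonlocal quadratic form does not vanish cannot vanish in the sense of Lions, so there exist $R>0$, $\delta>0$ and points $x_n^{(1)}\in\mathbb{R}^N$ with $\int_{B_R(x_n^{(1)})}|v_n^{(1)}|^{p'}\ge\delta$ for all $n$. Since $v_n^{(1)}\rightharpoonup 0$, the sequence $|x_n^{(1)}|\to\infty$; translating, $v_n^{(1)}(\cdot+x_n^{(1)})\rightharpoonup w^{(1)}\ne 0$ weakly, and one checks that $w^{(1)}$ is a critical point of $J$ (using translation invariance of $J$ and the fact that $J'(v_n)\to 0$, $J'(v)=0$, together with strong local convergence to pass to the limit in the equation).

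The second step is to set $v_n^{(2)}:=v_n-v-w^{(1)}(\cdot-x_n^{(1)})$ and iterate. Two bookkeeping facts must be established at each stage. First, an \emph{energy-splitting identity}:
\begin{equation*}
J(v_n^{(2)})=J(v_n)-J(v)-J(w^{(1)})+o(1),\qquad \|v_n^{(2)}\|_{p'}^{p'}=\|v_n\|_{p'}^{p'}-\|v\|_{p'}^{p'}-\|w^{(1)}\|_{p'}^{p'}+o(1),
\end{equation*}
and likewise $J'(v_n^{(2)})\to 0$ in the dual norm. The $L^{p'}$-norm splitting is the Brezis–Lieb lemma applied to $v_n-v$ and then to the translated subtraction of $w^{(1)}$; the delicate part is the quadratic form, where one must show that the cross terms $\int Q^{1/p}(v-v)\,\mathbf{R}^s(Q^{1/p}w^{(1)}(\cdot-x_n^{(1)}))$ and the self-interaction remainder tend to $0$. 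Here is exactly where Lemma~\ref{nonlocal} is used: splitting $\mathbb{R}^N$ into $B_\rho$ and its complement, the piece of $w^{(1)}(\cdot-x_n^{(1)})$ supported near the origin is small in $L^{p'}$, while the far-away pieces interact weakly because their supports are separated by a distance $\to\infty$, so the bound $Cr^{-\lambda_p}$ kills the cross term (recall $\lambda_p=\frac{N-1}{2}-\frac{N+1}{p}>0$ precisely under the hypothesis $p>\frac{2(N+1)}{N-1}$, which is why that exponent restriction appears). Second, a \emph{separation fact}: if at the $(j+1)$-th stage a new nonvanishing direction $x_n^{(j+1)}$ appears, then $|x_n^{(j+1)}-x_n^{(i)}|\to\infty$ for $i\le j$ — otherwise $v_n^{(j+1)}(\cdot+x_n^{(j+1)})$ would still have nontrivial weak limit after subtracting the already-extracted profiles, contradicting that we subtracted the full weak limit at stage $i$.

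Finally, the \emph{termination} argument. From Lemma~\ref{ps condition}-type reasoning, or directly from $c_\varepsilon>0$ (here $c=c(Q(0))>0$), every nontrivial critical point $w$ of $J$ satisfies $J(w)=(\frac1{p'}-\frac12)\|w\|_{p'}^{p'}\ge c>0$ with a uniform lower bound on $\|w\|_{p'}$; since the energies add up, $\sum_{j}J(w^{(j)})\le d$ at every finite stage, so the process stops after at most $m\le d/c$ steps, i.e. at some stage $v_n^{(m+1)}\to 0$ strongly in $L^{p'}$. Relabelling the extracted profiles (absorbing $v$ as $w$ with $x_n\equiv 0$, discarding it if $v=0$), we obtain the three assertions: the strong convergence $\|v_n-\sum_{j=1}^m w^{(j)}(\cdot-x_n^{(j)})\|_{p'}\to 0$, the divergence $|x_n^{(i)}-x_n^{(j)}|\to\infty$ for $i\ne j$, and the energy identity $\sum_{j=1}^m J(w^{(j)})=d$ (now an exact equality because the remainder converges strongly, so all the $o(1)$ splitting errors close up).

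\medskip
\noindent\textbf{Main obstacle.} The hardest step is the decoupling of the nonlocal quadratic form under translation to infinity, i.e. proving that $\int Q^{1/p}u_n\,\mathbf{R}^s(Q^{1/p}\tilde u_n)\to 0$ when $u_n$ localizes near the origin and $\tilde u_n$ localizes near $x_n$ with $|x_n|\to\infty$. Because $\mathbf{R}^s$ is not positivity-preserving and its kernel decays only like $|x|^{-(N-1)/2}$ (oscillating), one cannot argue by pointwise domination; Lemma~\ref{nonlocal} is tailor-made for this, but applying it cleanly requires a careful cut-off decomposition of each profile into a "core" part (small $L^{p'}$ norm, handled by continuity of $\mathbf{R}^s:L^{p'}\to L^p$) and a "tail" part whose support is genuinely far from the other profile's support (handled by the $r^{-\lambda_p}$ decay), and then summing the $O(m^2)$ cross terms. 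Verifying that $J'(v_n^{(j+1)})\to 0$ in $(L^{p'})^*$ throughout the iteration relies on the same estimate.
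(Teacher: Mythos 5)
Your overall architecture — iterative profile extraction via the nonvanishing theorem, Brezis--Lieb splitting of the $L^{p'}$-term and of the nonlocal quadratic term, and termination because each nontrivial critical point costs at least $c>0$ — is exactly the paper's. But you make one genuine detour: you assert that the decoupling of the nonlocal quadratic form under translations to infinity is the ``hardest step'' and that Lemma~\ref{nonlocal} is the key tool, requiring a careful core/tail cut-off decomposition. That is unnecessary. After translating to the concentration point, the sequence $v_n^{(1)}=v_n(\cdot+x_n^{(1)})$ converges weakly to $w^{(1)}$ in $L^{p'}$, and the cross term $\int Q^{1/p}(v_n^{(1)}-w^{(1)})\,\mathbf{R}^{s}\bigl(Q^{1/p}w^{(1)}\bigr)\,\mathrm{d}x$ vanishes automatically because $\mathbf{R}^{s}\bigl(Q^{1/p}w^{(1)}\bigr)$ is a fixed element of $L^{p}=(L^{p'})^{*}$ and $v_n^{(1)}-w^{(1)}\rightharpoonup 0$. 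No spatial separation estimate is needed; the quadratic form decouples purely by weak convergence and $L^{p'}\to L^{p}$-boundedness of $\mathbf{R}^{s}$, which is how the paper does it. Lemma~\ref{nonlocal} belongs to the Palais--Smale argument (Lemma~\ref{ps condition}), where one must decouple two pieces of the \emph{same} sequence across an annulus, a situation where weak convergence gives nothing; invoking it here would work but is substantially more effort than required.

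Two smaller points. First, you extract the global weak limit $v$ of $v_n$ before calling nonvanishing; the paper instead applies the nonvanishing theorem to $v_n$ directly (using $\int Q^{1/p}v_n\mathbf{R}^s(Q^{1/p}v_n)\to \frac{2p'd}{2-p'}>0$) and translates first, which guarantees that the first extracted profile is nontrivial without any case distinction on whether $v=0$. Both orderings work, but the paper's is cleaner. Second, you gloss over the gradient splitting $J'(v_n^{(2)})=J'(v_n^{(1)})-J'(w^{(1)})+o(1)$: the $\mathbf{R}^{s}$ part is again handled by weak convergence, but the nonlinear part requires the pointwise estimate $\bigl||a|^{q-1}a-|b|^{q-1}b\bigr|\le 2^{1-q}|a-b|^{q}$ for $0<q<1$ combined with the local strong convergence from Lemma~\ref{PS}; this is where the sublinearity $p'<2$ of the dual nonlinearity enters, and your proposal should name it rather than attribute the step to Lemma~\ref{nonlocal}.
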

\begin{proof}
For any bounded $(\mathrm{PS})_{d}$-sequenc $(v_{n})_{n}$, we have
\begin{equation}
\mathop{\mathrm{lim}}\limits_{n\longrightarrow\infty}\int_{\mathbb{R}^{N}}Q^{\frac{1}{p}}v_{n}\mathbf{R}^{s}\big(Q^{\frac{1}{p}}v_{n}\big)\mathrm{d}x
=\frac{2p'}{2-p'}\mathop{\mathrm{lim}}\limits_{n\longrightarrow\infty}\big[J(v_{n})-\frac{1}{p'}J'(v_{n})v_{n}\big]=\frac{2p'd}{2-p'}>0.
\end{equation}
It then follows from the nonvanishing theorem \cite[Theorem 4.1]{Shen2023} that there are $R,\zeta>0$ and a sequence $(x_{n}^{(1)})_{n}$ such that, up to a subsequence,
\begin{equation}
\int_{B_{r}(x_{n}^{(1)})}|v_{n}|^{p'}\mathrm{d}x\geq\zeta>0~~~\mathrm{for~all}~n.
\end{equation}
Setting $v_{n}^{(1)}=v_{n}(\cdot+x_{n}^{(1)})$, then by the invariance of the energy functional, $(v_{n}^{1})_{n}$ is also a $(\mathrm{PS})_{d}$-sequence for $J$. By Lemma \ref{PS}, going to a further subsequence, we may assume $v_{n}^{(1)}\rightharpoonup w^{(1)}$
 weakly, $1_{B_{R}}v_{n}^{(1)}\longrightarrow 1_{B_{R}}w^{(1)}$ strongly in $L^{p'}(\mathbb{R}^{N})$, and $J(w^{(1)})\leq \mathop{\mathrm{lim}}\limits_{n\longrightarrow\infty}J(v_{n}^{(1)})=d$. These properties and the definition of $v_{n}^{(1)}$ imply that $w^{(1)}$ is a nontrivial critical point of $J$.

 If $J(w^{(1)})=d$, we obtain
 \begin{equation}
 \begin{aligned}
 \big(\frac{1}{p'}-\frac{1}{2}\big)||w^{(1)}||_{p'}^{p'}&=J(w^{(1)})-\frac{1}{2}J'(w^{(1)})w^{(1)}\\
 &=d=\mathop{\mathrm{lim}}\limits_{n\longrightarrow\infty}\big[J(v_{n})-\frac{1}{2}J'(v_{n})v_{n}\big]=\big(\frac{1}{p'}-\frac{1}{2}\big)
 \mathop{\mathrm{lim}}\limits_{n\longrightarrow\infty}||v_{n}||_{p'}^{p'},
 \end{aligned}
 \end{equation}
i.e., $v_{n}^{(1)}\longrightarrow w^{(1)}$ strongly in $L^{p'}(\mathbb{R}^{N})$, then the lemma is proved.

Otherwise, $J(w^{(1)})<d$ and we set $v_{n}^{(2)}=v_{n}^{(1)}-w^{(1)}$. The weak convergence $v_{n}^{(1)}\rightharpoonup w^{(1)}$ then implies
\begin{equation}
\begin{aligned}
\int_{\mathbb{R}^{N}}Q^{\frac{1}{p}}v_{n}^{(2)}\mathbf{R}^{s}\big(Q^{\frac{1}{p}}v_{n}^{(2)}\big)\mathrm{d}x&=\int_{\mathbb{R}^{N}}Q^{\frac{1}{p}}v_{n}^{(1)}\mathbf{R}^{s}\big(Q^{\frac{1}{p}}v_{n}^{(1)}\big)\mathrm{d}x\\
&-\int_{\mathbb{R}^{N}}Q^{\frac{1}{p}}w_{n}^{(1)}\mathbf{R}^{s}\big(Q^{\frac{1}{p}}w_{n}^{(1)}\big)\mathrm{d}x+o(1),
\end{aligned}
\end{equation}
as $n\longrightarrow\infty$. Moreover, by the Br\'{e}zis-Lieb Lemma \cite{Brezis1983},
\begin{equation}
\int_{\mathbb{R}^{N}}|v_{n}^{(2)}|^{p'}\mathrm{d}x=\int_{\mathbb{R}^{N}}|v_{n}^{(1)}|^{p'}\mathrm{d}x-\int_{\mathbb{R}^{N}}|w^{(1)}|^{p'}\mathrm{d}x+o(1),~~\mathrm{as}~n\longrightarrow\infty.
\end{equation}
These properties and the translation invariance of $J$ together give
\begin{equation}
J(v_{n}^{(2)})=J(v_{n}^{(1)})-J(w^{(1)})+o(1)=d-J(w^{(1)})+o(1),~~\mathrm{as}~n\longrightarrow\infty.
\end{equation}
Since by Lemma \ref{PS}, $1_{B_{r}}v_{n}^{(1)}\longrightarrow1_{B_{r}}w^{(1)}$ strongly in $L^{p'}(\mathbb{R}^{N})$ for all $r>0$, we find
\begin{equation}
1_{B_{r}}|v_{n}^{(2)}|^{p'-2}v_{n}^{(2)}-1_{B_{r}}|v_{n}^{(1)}|^{p'-2}v_{n}^{(1)}+1_{B_{r}}|w^{(1)}|^{p'-2}w^{(1)}\longrightarrow 0~~\mathrm{in}~L^{p}(\mathbb{R}^{N}),~~\mathrm{as}~n\longrightarrow\infty.
\end{equation}
On the other hand, since $||a|^{q-1}a-|b|^{q-1}b|\leq 2^{1-q}|a-b|^{q}$ for all $a,b\in\mathbb{R}$ and $0<q<1$, it follows that
\begin{equation}
\int_{\mathbb{R}^{N}\setminus B_{r}}\big||v_{n}^{(2)}|^{p'-2}v_{n}^{(2)}-|v_{n}^{(1)}|^{p'-2}v_{n}^{(1)}\big|^{p}\mathrm{d}x\leq 2^{(2-p')p}\int_{\mathbb{R}^{N}\setminus B_{r}}|w^{(1)}|^{p'}\mathrm{d}x\longrightarrow 0,
\end{equation}
as $n\longrightarrow\infty$, uniformly in $n$. The both estimates then give the strong convergence
\begin{equation}
|v_{n}^{(2)}|^{p'-2}v_{n}^{(2)}-|v_{n}^{(1)}|^{p'-2}v_{n}^{(1)}+|w^{(1)}|^{p'-2}w^{(1)}\longrightarrow 0~~\mathrm{in}~L^{p}(\mathbb{R}^{N}),~~\mathrm{as}~n\longrightarrow\infty,
\end{equation}
and therefore,
\begin{equation}
J'(v_{n}^{(2)})=J'(v_{n}^{(1)})-J'(w^{(1)})+o(1),~~~\mathrm{as}~n\longrightarrow \infty.
\end{equation}
This implies that $(v_{n}^{(2)})_{n}$ is a (PS)-sequence for $J$ at level $d-J(w^{(1)})>0$. Thus, the nonvanishing theorem again gives the existence of $R_{1}$, $\zeta_{1}>0$ and of a sequence $(y_{n})_{n}\subset\mathbb{R}^{N}$ such that, going to a subsequence
\begin{equation}
\int_{B_{R_{1}}(y_{n})}|v_{n}^{(2)}|^{p'}\mathrm{d}x\geq \zeta_{1}>0~~~\mathrm{for~all}~n.
\end{equation}
By Lemma \ref{PS}, there is a critical point $w^{(2)}$ of $J$ such that (taking a further subsequence)
$v_{n}^{(2)}(\cdot+y_{n})\rightharpoonup w^{(2)}$ weakly and $1_{B}v_{n}^{(2)}(\cdot+y_{n})\longrightarrow 1_{B}w^{(2)}$ strongly in $L^{p'}(\mathbb{R}^{N})$, for all bounded and measurable set $B\subset\mathbb{R}^{N}$. In particular, $w^{(2)}\neq0$ and since $v_{n}^{(2)}\rightharpoonup 0$, we see that $|y_{n}|\longrightarrow\infty$ as $n\longrightarrow\infty$.

Setting $x_{n}^{(2)}=x_{n}^{(1)}=y_{n}$, we obtain $|x_{n}^{(2)}-x_{n}^{(1)}|\longrightarrow\infty$ as $n\longrightarrow\infty$, and
\begin{equation}
v_{n}-\big(w^{(1)}(\cdot+x_{n}^{(1)})+w_{n}^{(2)}(\cdot+x_{n}^{(2)})\big)=v_{n}^{(2)}(\cdot+y_{n}-x_{n}^{(2)})-w^{(2)}(\cdot-x_{n}^{(2)})\rightharpoonup0,
\end{equation}
weakly in $L^{p'}(\mathbb{R}^{N})$. In addition, the same argument as before show that
\begin{equation}
J(w^{(2)})\leq \mathop{\mathrm{lim~inf}}\limits_{n\longrightarrow\infty} J(v_{n}^{(2)})=d-J(w^{(1)})
\end{equation}
with equality if and only $v_{n}^{(2)}(\cdot+y_{n})\longrightarrow w^{(2)}$ strongly in $L^{p'}(\mathbb{R}^{N})$. If the inequality is strict, we can iterate the procedure. Since for every nontrivial critical point $w$ of $J$ we have $J(w)\geq c=\mathop{\mathrm{inf}}\limits_{\mathcal{N}}J>0$, the iterate has to stop after finitely many steps, and we obtain the desired result.
\end{proof}

\begin{Prop}\label{representation1}
Let $(\varepsilon_{n})_{n}\subset (0,\infty)$ satisfy $\varepsilon_{n}\longrightarrow0$ as $n\longrightarrow\infty$. Consider for each $n$ some $v_{n}\in\mathcal{N}_{\varepsilon_{n}}$ and assume that $J_{\varepsilon_{n}}(v_{n})\longrightarrow c_{0}$ as $n\longrightarrow\infty$. Then, there is $x_{0}\in M$, a critical point $w_{0}$ of $J_{0}$ at level $c_{0}$ and a sequence $(y_{n})_{n}\subset\mathbb{R}^{N}$ such that (up to a subsequence)
\begin{equation}
\varepsilon_{n}y_{n}\longrightarrow x_{0}~~\mathrm{and}~~||v_{n}(\cdot+y_{n})-w_{0}||_{p'}\longrightarrow 0
~~\mathrm{as}~n\longrightarrow\infty.
\end{equation}
\end{Prop}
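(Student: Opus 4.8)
The plan is to adapt the scheme of \cite[Theorem~1(ii)]{Evequoz2017-1}, in three steps: reduce to Palais--Smale sequences, treat the (easy) case where the concentration point stays bounded, and exclude the escape-to-infinity case (the main obstacle).

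\emph{Step 1: reduction and nonvanishing.} Since $c_{\varepsilon_n}\le J_{\varepsilon_n}(v_n)$ and $c_{\varepsilon_n}\to c_0$ by Lemma~\ref{limit4}, each $v_n$ minimises $J_{\varepsilon_n}$ on $\mathcal N_{\varepsilon_n}$ up to $o(1)$. Ekeland's variational principle on the complete $C^1$-manifold $\mathcal N_{\varepsilon_n}$ gives $\hat v_n\in\mathcal N_{\varepsilon_n}$ with $\|\hat v_n-v_n\|_{p'}\to0$, $J_{\varepsilon_n}(\hat v_n)\to c_0$ and $(J_{\varepsilon_n}|_{\mathcal N_{\varepsilon_n}})'(\hat v_n)\to0$; since $\mathcal N_{\varepsilon_n}$ is a natural constraint with Lagrange multiplier controlled uniformly in $n$ (because $\|\hat v_n\|_{p'}^{p'}=(\tfrac1{p'}-\tfrac12)^{-1}J_{\varepsilon_n}(\hat v_n)$ is bounded and bounded away from $0$), this yields $J'_{\varepsilon_n}(\hat v_n)\to0$ in $L^{p}(\mathbb R^N)$. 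The conclusion for $\hat v_n$ transfers back to $v_n$ with the same translations and limit, so I relabel and assume $(v_n)$ is bounded and bounded away from $0$ in $L^{p'}$ with $J'_{\varepsilon_n}(v_n)\to0$; by the Nehari identity $\int_{\mathbb R^N}Q_{\varepsilon_n}^{1/p}v_n\,\mathbf R^s(Q_{\varepsilon_n}^{1/p}v_n)\,dx=\|v_n\|_{p'}^{p'}\to(\tfrac1{p'}-\tfrac12)^{-1}c_0>0$. Applying the nonvanishing theorem of \cite[Sect.~4]{Shen2023} to $f_n:=Q_{\varepsilon_n}^{1/p}v_n$ and using $|f_n|\le\|Q\|_\infty^{1/p}|v_n|$ gives $R,\zeta>0$ and $(y_n)\subset\mathbb R^N$ with $\int_{B_R(y_n)}|v_n|^{p'}\,dx\ge\zeta$. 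Put $\bar v_n:=v_n(\cdot+y_n)$ and $\bar Q_n(x):=Q(\varepsilon_n(x+y_n))$; by Lemma~\ref{PS}, along a subsequence $\bar v_n\rightharpoonup w_0$ in $L^{p'}$ with $1_B\bar v_n\to1_Bw_0$ strongly on every bounded $B$, so $w_0\ne0$. Passing to a further subsequence, either $\varepsilon_n y_n\to x_0\in\mathbb R^N$, or $|\varepsilon_n y_n|\to\infty$.

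\emph{Step 2: the case $\varepsilon_n y_n\to x_0$.} Here $\bar Q_n\to Q(x_0)$ uniformly on compact sets, since $\varepsilon_n x\to0$ uniformly on balls and $Q$ is continuous. The translated relation $\int|\bar v_n|^{p'-2}\bar v_n\phi\,dx=\int\bar Q_n^{1/p}\phi\,\mathbf R^s(\bar Q_n^{1/p}\bar v_n)\,dx+o(1)$, $\phi\in C_c^\infty(\mathbb R^N)$, passes to the limit — its left-hand side by local strong convergence of $\bar v_n$, its right-hand side because $\bar Q_n^{1/p}\phi\to Q(x_0)^{1/p}\phi$ in $L^{p'}$ and $\bar Q_n^{1/p}\bar v_n\rightharpoonup Q(x_0)^{1/p}w_0$ in $L^{p'}$, hence $\mathbf R^s(\bar Q_n^{1/p}\bar v_n)\rightharpoonup\mathbf R^s(Q(x_0)^{1/p}w_0)$ in $L^{p}$ — and shows $w_0$ is a nontrivial critical point of the constant-coefficient functional $J_{Q(x_0)}$. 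Thus $w_0$ lies on the Nehari manifold $\mathcal N_{Q(x_0)}$, so $\|w_0\|_{p'}^{p'}\ge(\tfrac1{p'}-\tfrac12)^{-1}c_{Q(x_0)}\ge(\tfrac1{p'}-\tfrac12)^{-1}c_0$ (the last inequality being the monotonicity argument in the proof of Lemma~\ref{limit4}, since $Q(x_0)\le Q_0$). Combined with weak lower semicontinuity, $\|w_0\|_{p'}^{p'}\le\liminf_n\|\bar v_n\|_{p'}^{p'}=(\tfrac1{p'}-\tfrac12)^{-1}c_0$, so $\|w_0\|_{p'}=\lim_n\|\bar v_n\|_{p'}$; as $L^{p'}$ is uniformly convex, $\bar v_n\to w_0$ strongly in $L^{p'}$. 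Then $J_{Q(x_0)}(w_0)=\lim_nJ_{\varepsilon_n}(v_n)=c_0$, whence $c_{Q(x_0)}=c_0$. Finally, using a least-energy critical point of $J_{Q_0}$ (which exists by Lemma~\ref{mountain3}(iii)) as a test function for $J_{Q(x_0)}$ — its projection onto $\mathcal N_{Q(x_0)}$ has factor $>1$ when $Q(x_0)<Q_0$, so $c_{Q(x_0)}>c_0$ in that case — forces $Q(x_0)=Q_0$, i.e. $x_0\in M$. Hence $J_{Q(x_0)}=J_0$, $w_0$ is a critical point of $J_0$ at level $c_0$, and $\|v_n(\cdot+y_n)-w_0\|_{p'}\to0$, as desired.

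\emph{Step 3: excluding $|\varepsilon_n y_n|\to\infty$.} This is where the work lies. Fix $\eta>0$ and $R_\eta$ with $Q\le Q_\infty+\eta$ outside $B_{R_\eta}$, so that $\bar Q_n\le Q_\infty+\eta$ on $B_{\rho_n}$ with $\rho_n:=(|\varepsilon_n y_n|-R_\eta)/\varepsilon_n\to\infty$. Following the localisation in the proof of Lemma~\ref{ps condition}, fix $r$ large in terms of $\eta$ (by a pigeonhole over annuli, so that $\int_{r<|x|<2r}|\bar v_n|^{p'}\,dx<\eta$ along a subsequence, $r^{-\lambda_p}<\eta$, and $B_{2r}\subset B_{\rho_n}$ for $n$ large), and set $w_n:=1_{B_r}\bar v_n$; then $\bar Q_n\le Q_\infty+\eta$ on $\operatorname{supp}w_n$ and $\|w_n\|_{p'}^{p'}\ge\zeta$. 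Estimating the nonlocal interaction of $w_n$ with the part of $\bar v_n$ supported in $\mathbb R^N\setminus B_{2r}$ by Lemma~\ref{nonlocal}, the part in the thin annulus by H\"older, and using $J'_{\varepsilon_n}(v_n)\to0$, one gets $\int_{\mathbb R^N}\bar Q_n^{1/p}w_n\,\mathbf R^s(\bar Q_n^{1/p}w_n)\,dx=\|w_n\|_{p'}^{p'}+O(\eta^{1/p'})+o(1)\ge\tfrac{\zeta}{2}$ for $\eta$ small and $n$ large. Hence $u_n:=(\bar Q_n/(Q_\infty+\eta))^{1/p}w_n$ satisfies $|u_n|\le|w_n|$ and $(Q_\infty+\eta)^{1/p}u_n=\bar Q_n^{1/p}w_n$, so it lies in the cone where the quadratic part of $J_{Q_\infty+\eta}$ is positive and projects onto $\mathcal N_{Q_\infty+\eta}$ with factor $t_n$ satisfying $\limsup_n t_n^{p'}\le1+O(\eta^{1/p'})$ (since the quadratic form is preserved and $\|u_n\|_{p'}\le\|\bar v_n\|_{p'}$). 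Therefore $c_{Q_\infty+\eta}\le J_{Q_\infty+\eta}(t_nu_n)=(\tfrac1{p'}-\tfrac12)t_n^{p'}\|u_n\|_{p'}^{p'}\le t_n^{p'}J_{\varepsilon_n}(v_n)$ for each $n$, so $c_{Q_\infty+\eta}\le\limsup_n\!\bigl(t_n^{p'}J_{\varepsilon_n}(v_n)\bigr)\le(1+O(\eta^{1/p'}))c_0$. Letting $\eta\to0$ and using the explicit power law $c_{Q'}=(Q')^{-\frac{2p'}{p(2-p')}}c_{1}$ for the constant-coefficient level, we get $c_\infty\le c_0$, contradicting $c_0<c_\infty$ (which holds by Lemma~\ref{limit4} and Proposition~\ref{level}; when $Q_\infty=0$ one has $c_\infty=+\infty$ and the same estimate with coefficient $\eta$ already gives a contradiction as $\eta\to0$). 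The whole argument hinges on $c_0=\lim_nc_{\varepsilon_n}$ being the \emph{minimal} level: an escaping profile would have to pay the strictly larger energy $c_\infty$ of the problem at infinity.
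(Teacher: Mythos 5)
Your proposal is correct, but it takes a genuinely different route from the paper. The paper's key move is the change of unknown $v_{0,n}:=(Q_{\varepsilon_n}/Q_0)^{1/p}v_n$, which transfers the sequence into the constant-coefficient problem $J_0$ \emph{before} doing any analysis: since $|v_{0,n}|\le|v_n|$ while the quadratic form is preserved, the Nehari projection factors $t_{0,n}\le1$ are squeezed to $1$, so $(t_{0,n}v_{0,n})$ is a minimizing sequence for $J_0$ on $\mathcal N_0$, and the translation-invariant representation Lemma~\ref{representation} applies verbatim to yield $w_0$ and $(y_n)$. The escape case $|\varepsilon_n y_n|\to\infty$ is then killed in three lines by a Fatou argument applied to $\|v_n\|_{p'}^{p'}=\int(Q_0/\bar Q_n)^{p'-1}|v_{0,n}(\cdot+y_n)|^{p'}$, which forces $c_0\ge(Q_0/Q_\infty)^{p'-1}c_0$, contradicting (Q1) directly without any use of truncations, Lemma~\ref{nonlocal}, or $c_\infty$. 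Your proof instead stays with the variable-coefficient sequence $v_n$ itself: you retrieve a full (PS)-sequence via Ekeland and a Lagrange-multiplier estimate on $\mathcal N_{\varepsilon_n}$, obtain $(y_n)$ from nonvanishing applied to $Q_{\varepsilon_n}^{1/p}v_n$, pass to the limit in the varying Euler--Lagrange equation to identify $w_0$ as a critical point of $J_{Q(x_0)}$, use weak lower semicontinuity plus uniform convexity in place of the profile decomposition, and exclude escape by re-running the $B_r$/$B_{2r}$ truncation argument of Lemma~\ref{ps condition} to contradict $c_0<c_\infty$. What your route buys: it avoids invoking Lemma~\ref{representation} as a black box (replacing it with a direct norm-identification argument), and it makes explicit where $c_0<c_\infty$ enters. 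What the paper's route buys: brevity and a cleaner escape step (the Fatou inequality needs no annular estimates and works uniformly in the $Q_\infty=0$ and $Q_\infty>0$ cases with the same two-line computation), and the problem becomes translation-invariant from the start so no ``diagonal'' modifications of Lemma~\ref{PS} for varying $\varepsilon_n$ are needed. One point you should flag in your Step~1: the local strong convergence of $\bar v_n$ is quoted from Lemma~\ref{PS}, which is stated for a single fixed $\varepsilon$; since your $\bar Q_n$ varies with $n$, you are implicitly re-proving a variant for a locally uniformly convergent sequence of coefficients, which works but is not literally the cited lemma — the paper sidesteps this entirely by working with $J_0$.
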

\begin{proof}
For each $n\in\mathbb{N}$, set $v_{0,n}:=(\frac{Q_{\varepsilon_{n}}}{Q_{0}})^{\frac{1}{p}}v_{n}$. It follows that $|v_{0,n}|\leq|v_{n}|$ a.e. on $\mathbb{R}^{N}$ and that
\begin{equation}
\int_{\mathbb{R}^{N}}Q_{0}^{\frac{1}{p}}v_{0,n}\mathbf{R}^{s}\big(Q_{0}^{\frac{1}{p}}v_{0,n}\big)\mathrm{d}
=\int_{\mathbb{R}^{N}}Q_{\varepsilon_{n}}^{\frac{1}{p}}v_{n}\mathbf{R}^{s}(Q_{\varepsilon_{n}}^{\frac{1}{p}}v_{n})\mathrm{d}x>0.
\end{equation}
Therefore, setting
\begin{equation}
t_{0,n}^{2-p'}=\frac{\int_{\mathbb{R}^{N}}|v_{0,n}|^{p'}\mathrm{d}x}{\int_{\mathbb{R}^{N}}Q_{0}^{\frac{1}{p}}v_{0,n}\mathbf{R}^{s}\big(Q_{0}^{\frac{1}{p}}v_{0,n}\big)\mathrm{d}x}
\end{equation}
we find that $t_{0,n}v_{0,n}\in\mathcal{N}_{0}$ and $0<t_{0,n}\leq1$. As a consequence, we can write
\begin{equation}
\begin{aligned}
c_{0}&\leq J_{0}(t_{0,n}v_{0,n})=(\frac{1}{p'}-\frac{1}{2})t_{0,n}^{2}\int_{\mathbb{R}^{N}}Q_{0}^{\frac{1}{p}}v_{0,n}\mathbf{R}^{s}\big(Q_{0}^{\frac{1}{p}}v_{0,n}\big)\mathrm{d}x\\
&=(\frac{1}{p'}-\frac{1}{2})t_{0,n}^{2}\int_{\mathbb{R}^{N}}Q_{\varepsilon_{n}}^{\frac{1}{p}}v_{n}\mathbf{R}^{s}\big(Q_{\varepsilon_{n}}^{\frac{1}{p}}v_{n}\big)\mathrm{d}x\\
&=t_{0,n}^{2}J_{\varepsilon_{n}}(v_{n})\leq J_{\varepsilon_{n}}(v_{n})\longrightarrow c_{0}~~~\mathrm{as}~n\longrightarrow \infty.
\end{aligned}
\end{equation}
In particular, we find
\begin{equation}
\mathop{\mathrm{lim}}\limits_{n\longrightarrow\infty}~t_{0,n}=1,
\end{equation}
and $(t_{0,n}v_{0,})_{n}\subset\mathcal{N}_{0}$ is thus a minimizing sequence for $J_{0}$ on $\mathcal{N}_{0}$. Using Ekeland's variational principle and the fact that $\mathcal{N}_{0}$ is a natural constraint, we obtain the existence of a $(\mathrm{PS})_{c_{0}}$-sequence $(w_{n})_{n}\subset L^{p'}(\mathbb{R}^{N})$ for $J_{0}$ with the property that $||v_{0,n}-w_{n}||_{p'}\longrightarrow0$, as $n\longrightarrow\infty$.

By Lemma \ref{representation}, there exists a critical point $w_{0}$ for $J_{0}$ at level $c_{0}$ and a sequence $(y_{n})_{n}\subset\mathbb{R}^{N}$ such that (up to a subsequence) $||w_{n}(\cdot+y_{n})-w_{0}||_{p'}\longrightarrow 0$, as $n\longrightarrow\infty$. Therefore,
\begin{equation}
v_{0,n}(\cdot+y_{n})\longrightarrow w_{0}~~~\mathrm{strongly~in~}L^{p'}(\mathbb{R}^{N}),~~\mathrm{as}~n\longrightarrow\infty.
\end{equation}

We are going to show that $(\varepsilon_{n}y_{n})_{n}$ is bounded. Suppose not, there exist some subsequence (which we still call $(\varepsilon_{n}y_{n})_{n})$ such that $\mathop{\mathrm{lim}}\limits_{n\longrightarrow\infty}|\varepsilon_{n}y_{n}|=\infty$. We consider the following two cases.

(1)~ $Q_{\infty}=0$. In this case, by the assumption on $Q$, we have $Q(\varepsilon_{n}\cdot+\varepsilon_{n}y_{n})\longrightarrow 0$, as $n\longrightarrow\infty$, holds uniformly on bounded sets of $\mathbb{R}^{N}$. From the definition of $v_{0,n}$, we infer that $v_{0,n}(\cdot+y_{n})\rightharpoonup0$ and therefore $w_{0}=0$, in contradiction to $J_{0}(w_{0})=c_{0}>0$. Hence, $(\varepsilon_{n}y_{n})_{n}$ is bounded in this case.

(2)~$Q_{\infty}>0$. By the Fatou's lemma and the strong convergence $v_{0,n}(\cdot+y_{n})\longrightarrow w_{0}$, we deduce that
\begin{equation}
\begin{aligned}
c_{0}&=\mathop{\mathrm{lim}}\limits_{n\longrightarrow\infty}J_{\varepsilon_{n}}(v_{n})=\mathop{\mathrm{lim}}\limits_{n\longrightarrow\infty}(\frac{1}{p'}-\frac{1}{2})\int_{\mathbb{R}^{N}}|v_{n}|^{p'}\mathrm{d}x\\
&=\mathop{\mathrm{lim}}\limits_{n\longrightarrow\infty}(\frac{1}{p'}-\frac{1}{2})\int_{\mathbb{R}^{N}}|v_{n}(x+y_{n})|^{p'}\mathrm{d}x\\
&=\mathop{\mathrm{lim~inf}}\limits_{n\longrightarrow\infty}(\frac{1}{p'}-\frac{1}{2})
\int_{\mathbb{R}^{N}}\big(\frac{Q_{0}}{Q(\varepsilon_{n}x+\varepsilon_{n}y_{n})}\big)^{p'-1}|v_{0,n}(x+y_{n})|^{p'}\mathrm{d}x\\
&\geq (\frac{1}{p'}-\frac{1}{2})\int_{\mathbb{R}^{N}}\big(\frac{Q_{0}}{Q_{\infty}}\big)^{p'-1}|w_{0}|^{p'}\mathrm{d}x\\
&=\big(\frac{Q_{0}}{Q_{\infty}}\big)^{p'-1}c_{0},
\end{aligned}
\end{equation}
and this contradicts (Q1). Therefore, $(\varepsilon_{n}y_{n})_{n}$ is a bounded sequence, and we may assume (going to a subsequence) that $\varepsilon_{n}y_{n}\longrightarrow x_{0}\in\mathbb{R}^{N}$.

Since $Q(\varepsilon_{n}x+\varepsilon_{n}y_{n})\longrightarrow Q_{x_{0}}$, as $n\longrightarrow\infty$, uniformly on bounded set, the argument of case (1) above gives $Q(x_{0})>0$ and, using the Dominated Convergence Theorem, we see that $Q(x_{0})=Q_{0}$, since the following holds.
\begin{equation}
\begin{aligned}
c_{0}&=\mathop{\mathrm{lim}}\limits_{n\longrightarrow\infty}J_{\varepsilon_{n}}(v_{n})=\mathop{\mathrm{lim}}\limits_{n\longrightarrow\infty}
(\frac{1}{p'}-\frac{1}{2})\int_{\mathbb{R}^{N}}|v_{n}|^{p'}\mathrm{d}x\\
&=\mathop{\mathrm{lim}}\limits_{n\longrightarrow\infty}(\frac{1}{p'}-\frac{1}{2})\int_{\mathbb{R}^{N}}\big(\frac{Q_{0}}{Q(\varepsilon_{n}x+\varepsilon_{n}y_{n})}\big)^{p'-1}|v_{0,n}(x+y_{n})|^{p'}\mathrm{d}x\\
&=(\frac{1}{p'}-\frac{1}{2})\int_{\mathbb{R}^{N}}\big(\frac{Q_{0}}{Q(x_{0})}\big)^{p'-1}|w_{0}|^{p'}\mathrm{d}x\\
&=\big(\frac{Q_{0}}{Q_{x_{0}}}\big)^{p'-1}c_{0}.
\end{aligned}
\end{equation}
Going back to the original sequence we obtain
\begin{equation}
v_{n}(\cdot+y_{n})
=\big(\frac{Q_{0}}{Q(\varepsilon_{n}+\varepsilon_{n}y_{n})}\big)^{\frac{1}{p}}v_{0,n}(\cdot+y_{n})\longrightarrow\big(\frac{Q_{0}}{Q(x_{0})}\big)^{\frac{1}{p}}w_{0}=w_{0},~~\mathrm{as}~n\longrightarrow\infty,
\end{equation}
strongly in $L^{p'}(\mathbb{R}^{N})$, using again the Dominated Convergence Theorem. The proof is complete.
\end{proof}


\begin{proof}[\bf Proof of Theorem 1.1 (ii)]
By (\ref{solution}), the dual ground state $u_{n}$ can be represented as
\begin{equation}
u_{n}(x)=k_{n}^{\frac{2s}{p-2}}\mathbf{R}^{s}\big(Q_{\varepsilon_{n}}^{\frac{1}{p}}v_{n}\big)(k_{n}x),~~x\in\mathbb{R}^{N},
\end{equation}
where $\varepsilon_{n}=k_{n}^{-1}$ and $v_{n}\in L^{p'}(\mathbb{R}^{N})$ is a least-energy critical point of $J_{\varepsilon}$, i.e., $J'_{\varepsilon}(v_{n})=0$ and $J_{\varepsilon_{n}}(v_{n})=c_{\varepsilon_{n}}$. By Lemma \ref{limit4} and Proposition \ref{representation1}, there is $x_{0}\in M$ and a sequence $(y_{n})_{n}\subset\mathbb{R}^{N}$ such that, as $n\longrightarrow\infty$, $x_{n}:=\varepsilon_{n}y_{n}\longrightarrow x_{0}$ and, going to a subsequence, $v(\cdot+y_{n})\longrightarrow w_{0}$ in $L^{p'}(\mathbb{R}^{N})$ for some least-energy critical point $w_{0}$ of $J_{0}$. Therefore, for $x\in\mathbb{R}^{N}$,
\begin{equation}
k_{n}^{-\frac{2s}{p-2}}u_{n}\big(\frac{x}{k_{n}}+x_{n}\big)=\mathbf{R}^{s}\big(Q_{\varepsilon_{n}}^{\frac{1}{p}}v_{n}\big)(x+y_{n})
=\mathbf{R}^{s}\big(Q_{\varepsilon_{n}}^{\frac{1}{p}}(\cdot+y_{n})v_{n}(\cdot+y_{n})\big)(x).
\end{equation}
On the other hand, by the continuity of $\mathbf{R}^{s}$ and the pointwise convergence $Q_{\varepsilon_{n}}(x+y_{n})\longrightarrow Q(x_{0})=Q_{0}$ as $n\longrightarrow\infty$ for all $x\in\mathbb{R}^{N}$, we have the following strong convergence
\begin{equation}
k_{n}^{-\frac{2s}{p-2}}u_{n}\big(\frac{x}{k_{n}}+x_{n}\big)\longrightarrow \mathbf{R}^{s}\big(Q_{0}^{\frac{1}{p}}w_{0}\big)~~~\mathrm{in}~L^{p}(\mathbb{R}^{N}).
\end{equation}
Setting $u_{0}=\mathbf{R}^{s}(Q_{0}^{\frac{1}{p}}w_{0})$, the properties $J_{0}(w_{0})=c_{0}$ and $J'_{0}(w_{0})=0$ imply that $u_{0}$ is a dual ground state solution of (\ref{limit5}) and this conclude the proof.
\end{proof} 



\section*{Acknowledgements}
The research of Minbo Yang was partially supported by NSFC (11971436, 12011530199) and ZJNSF (LD19A010001).

\end{document}